\documentclass[12pt,reqno]{amsart}

\usepackage{amssymb, amsmath, amsthm}
\usepackage[colorlinks=true, urlcolor=blue, linkcolor=blue, citecolor=blue]{hyperref}
\usepackage[alphabetic,lite,nobysame]{amsrefs}
\usepackage{verbatim}
\usepackage{amscd}   
\usepackage[all]{xy} 
\usepackage{youngtab} 
\usepackage{young} 
\usepackage{ytableau}
\usepackage{tikz}
\usepackage{ mathrsfs }
\usepackage{cases}
\usepackage{array}
\usepackage{cellspace}
\usepackage{tabu}
\usepackage{calligra,mathrsfs}
\usepackage{bm}
\usepackage{mathtools}
\usepackage{dotlessi}
\usepackage[margin=1in]{geometry}

\newcommand{\defi}[1]{{\bf \upshape\sffamily #1}}
\DeclareMathOperator{\ShHom}{\mathscr{H}\text{\kern -3pt {\calligra\large om}}\,}

\renewcommand{\a}{\alpha}
\renewcommand{\b}{\beta}
\newcommand{\bw}{\bigwedge}

\newcommand{\CC}{\mathbb{C}}

\renewcommand{\ll}{\lambda}

\newcommand{\onto}{\twoheadrightarrow}
\newcommand{\oo}{\otimes}
\newcommand{\pd}{\partial}

\newcommand{\GL}{\operatorname{GL}}

\newcommand{\rk}{\operatorname{rank}}
\newcommand{\SL}{\operatorname{SL}}

\newcommand{\Sym}{\operatorname{Sym}}
\newcommand{\Tor}{\operatorname{Tor}}

\newcommand{\coker}{\operatorname{coker}}
\renewcommand{\det}{\operatorname{det}}

\renewcommand{\ker}{\operatorname{ker}}

\newcommand{\reg}{\operatorname{reg}}

\newcommand{\bb}[1]{\mathbb{#1}}

\newcommand{\mc}[1]{\mathcal{#1}}
\newcommand{\mf}[1]{\mathfrak{#1}}

\newcommand{\op}[1]{\operatorname{#1}}

\def\PP{{\mathbf P}}
\def\lra{\longrightarrow}
\def\lla{\longleftarrow}

\newtheorem{theorem}[equation]{Theorem}
\newtheorem*{theorem*}{Theorem}
\newtheorem*{problem*}{Problem}
\newtheorem{lemma}[equation]{Lemma}
\newtheorem{conjecture}[equation]{Conjecture}
\newtheorem{proposition}[equation]{Proposition}

\newtheorem*{corollary*}{Corollary}

\theoremstyle{definition}

\newtheorem*{definition*}{Definition}

\theoremstyle{remark}
\newtheorem{remark}[equation]{Remark}
\newtheorem*{remark*}{Remark}

\numberwithin{equation}{section}

\begin{document}

\title[Powers of binary forms and derived Hermite reciprocity]{Powers of binary forms and \\derived Hermite reciprocity}

\author{Claudiu Raicu}
\address{Department of Mathematics, University of Notre Dame, 255 Hurley, Notre Dame, IN 46556\newline
\indent Institute of Mathematics ``Simion Stoilow'' of the Romanian Academy}
\email{craicu@nd.edu}

\author{Steven V Sam}
\address{Department of Mathematics, University of California, San Diego, La Jolla, CA 92093}
\email{ssam@ucsd.edu}

\author{Jerzy Weyman}
\address{Department of Mathematics, Jagiellonian University, Krak\'ow, Poland}
\email{jerzy.weyman@gmail.com}

\author{Fuxiang Yang}
\address{Department of Mathematics, University of Notre Dame, 255 Hurley, Notre Dame, IN 46556}
\email{fyang6@nd.edu}

\subjclass[2010]{Primary 13D02}

\date{September 1, 2026}

\keywords{Binary forms, Castelnuovo--Mumford regularity, syzygies, Veronese embeddings}

\begin{abstract} 
 For $a,b\geq 1$, Hilbert found in 1886 a collection of polynomial equations that cut out set-theoretically the variety $X$ parametrizing $a$-th powers of binary forms of degree~$b$. We determine the ideal of all polynomials vanishing on $X$, showing that it is generated in degree $b+1$ and that it has a linear minimal free resolution. We do this by generalizing results of Abdesselam and Chipalkatti on an analogue of the Foulkes--Howe map and by establishing a derived analogue of the classical Hermite reciprocity theorem for complexes of $\SL_2$-representations. In our investigation, we are led to the ideal generated by the subrepresentation $\Sym^{ab}(\CC^2) \subset \Sym^a(\Sym^b \CC^2)$. We determine its Castelnuovo--Mumford regularity in general and the minimal free resolution for small values of $b$. 
\end{abstract}

\maketitle

\section{Introduction}\label{sec:intro}

The study of binary forms was central to the 19th century theory of invariants and covariants, going back to Cayley and Sylvester, and continuing in the work of Hermite, Clebsch, Gordan, and Hilbert, but even today many basic questions regarding equations and syzygies for varieties of binary forms remain open. We let $\PP^d = \bb{P}(\Sym^d\CC^2)$ denote the projective space parametrizing complex binary forms of degree $d$ up to scaling. For any partition $\ll\vdash d$, we get a subvariety (a \defi{factorization locus}, or \defi{coincidence root locus})
\[ 
X_{\ll} = \{[F]\in\PP^d : F=L_1^{\ll_1}L_2^{\ll_2}\cdots,\text{ where each $L_i$ is a linear form}\}.
\]
When $\ll=(d)$, $X_{\ll}$ is the rational normal curve of degree $d$, while for $\ll=(1^d)$ we get $X_{\ll}=\PP^d$. Other cases of interest occur for $\ll=(2,1^{d-2})$, when $X_{\ll}$ is the discriminant hypersurface, or $\ll=(d-1,1)$, when $X_{\ll}$ is the tangent developable surface to the rational normal curve. It is a fundamental question, which remains largely open, to understand the minimal generators of the defining ideal $I(X_{\ll})$, as well as the higher syzygy modules. 

Beyond a handful of examples in small dimensions and easy cases $\ll=(2,1^{d-2})$, $\ll=(1^d)$, the complete description of the syzygies is only known for $\ll=(d)$ \cite{DE-syzygies}*{Section~6A}, $\ll=(d-1,1)$ \cite{AFPRW}*{Theorem~1.6}, and $\ll=(d/2,d/2)$ with $d$ even (\cite{abd-chi-BG-loci}*{Theorem~1.4} and \cite{abd-chi-bipartite}*{Theorem~3}). The main goal of this paper is to address the case when $\ll=(a^b)$ has equal parts, where $d=ab$. Hilbert constructed in \cite{hilbert} covariant equations that cut out $X_{(a^b)}$ set-theoretically (see also \cites{abd-chi-HC,ottaviani} for a historical account), and our main result resolves the corresponding ideal-theoretic problem, and confirms \cite{abd-chi-HC}*{Conjecture~5.1}.

\begin{theorem}\label{thm:Xab}
 Suppose $a,b\geq 2$, set $d=ab$, and $X=X_{(a^b)}$. The homogeneous ideal $I(X)$ is generated by polynomials of degree $b+1$ (the maximal minors of a matrix of linear forms), and its minimal free resolution is linear. The projective dimension of $I(X)$ is $(d-1)$.
\end{theorem}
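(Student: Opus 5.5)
The plan is to realize $X$ as the image of the $a$-th power map $\nu\colon \PP^b=\PP(\Sym^b\CC^2)\to\PP^d$, $[G]\mapsto[G^a]$, and to compute syzygies by Koszul cohomology on $\PP^b$. Write $U=\CC^2$ and $S=\Sym(\Sym^d U)$, the coordinate ring of $\PP^d$. The map $\nu$ is bijective onto $X$. It is the composition of the Veronese embedding $v_a\colon\PP^b\to\PP(\Sym^a\Sym^bU)$ with the linear projection dual to the inclusion $W:=\Sym^{ab}U\subset \Sym^a(\Sym^bU)=H^0(\PP^b,\mathcal O(a))$, which is the subrepresentation singled out in the abstract. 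Hence $I(X)=\ker\bigl(S\to\bigoplus_k H^0(\PP^b,\mathcal O(ka))\bigr)$, with $S=\Sym(W)$ acting through this inclusion. Let $R$ be the image of this map.

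\textbf{Step 1 (Koszul reduction).} Consider the kernel bundle $0\to M_W\to W\otimes\mathcal O_{\PP^b}\to\mathcal O(a)\to 0$. The Koszul homology of $R$ is controlled by the cohomology groups $H^q\bigl(\PP^b,\bw^p M_W\otimes\mathcal O(ka)\bigr)$. I will show that $I(X)$ is generated in degree $b+1$ with linear resolution by proving two things:
\begin{itemize}
\item the vanishing of $H^1\bigl(\bw^{p}M_W(ka)\bigr)$ in the ranges that would produce nonlinear syzygies;
\item that the failure of $R$ to equal the full section ring $\bigoplus_k \Sym^{ka}\Sym^bU$ contributes only in a single linear strand.
\end{itemize}
Concretely, I will take the $\SL_2$-equivariant complexes $\bw^\bullet W\otimes \Sym^{\bullet}(\Sym^b U)$ computing these groups and use the generalized Foulkes--Howe map $\Sym^k(\Sym^{ab}U)\to\Sym^{ka}(\Sym^bU)$, extending Abdesselam--Chipalkatti. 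This should identify $R_k$ as the image of that map, and its cokernel as a representation computable by Hermite-type dualities.

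\textbf{Step 2 (derived Hermite reciprocity).} The key tool is an equivariant quasi-isomorphism between the relevant Koszul-type complexes built from $\Sym^a\Sym^b U$ and those built from $\Sym^b\Sym^aU$ (or $\bw^b\Sym^{a+b-1}U$). On the swapped side, the complex becomes a familiar Eagon--Northcott/Veronese-type complex whose homology is known to be concentrated in one linear strand. Transporting back gives the vanishing needed in Step 1. In particular, the degree-$(b+1)$ generators appear as the maximal minors of the $(b+1)\times N$ catalecticant-type matrix of linear forms, namely Hilbert's covariant equations: the condition is that the space of derivatives $\partial^{\,d-?}F$ contains a rank-deficient configuration. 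I will also check directly that these minors lie in $I(X)$ and span the degree-$(b+1)$ component of the ideal.

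\textbf{Step 3 (projective dimension).} The length of the linear strand is read off from the top nonvanishing Koszul group. I expect $\Tor_{d-1}(I(X),\CC)\neq 0$, coming from $H^1$ of the top exterior power of $M_W$ twisted suitably; this corresponds to the depth of $R$ being $1$, so $R$ is not Cohen--Macaulay. I will exhibit this nonzero group explicitly as an $\SL_2$-representation. Vanishing beyond $d-1$ is automatic: $S$ has $d+1$ variables and $R$ is a domain, so $\operatorname{depth} R\ge 1$.

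The main obstacle will be Step 2: constructing the derived Hermite reciprocity quasi-isomorphism $\SL_2$-equivariantly and controlling the cokernel of the Foulkes--Howe-type map, which is not surjective in general. My first attempt will be to realize both sides as pushforwards from a common flag/product variety (e.g.\ $\PP^1\times\PP^b$ with Sym-power bundles) and to compare them via Grothendieck duality on $\PP^1$.
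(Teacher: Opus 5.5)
\paragraph{The central gap: the Foulkes--Howe maps.} Your proposal never isolates, let alone proves, the fact on which the whole theorem rests. That fact is the maximal-rank property of the Foulkes--Howe maps $\alpha_k\colon \Sym^k(\Sym^dU)\to\Sym^{ak}(\Sym^bU)$: they are injective for $k\le b$ and surjective for $k\ge b$. Since source and target have equal dimension when $k=b$, this amounts to $\alpha_b$ being an isomorphism. Injectivity for $k\le b$ is exactly the absence of equations of degree $\le b$. Surjectivity for $k\ge b$ says that $M=R^{(a)}/B$ lives in degrees $\le b-1$, where $R^{(a)}$ is the full Veronese section ring and $B$ the coordinate ring of $X$. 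Combined with $\reg R^{(a)}=b-\lfloor b/a\rfloor\le b$, this gives $\reg B\le b$ and hence the linear resolution. Your remark that $R_k$ is the image of $\alpha_k$ is a tautology.

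Your Step~2, which is supposed to supply the needed vanishing, is only a placeholder. The ``quasi-isomorphism between Koszul-type complexes built from $\Sym^a\Sym^bU$ and $\Sym^b\Sym^aU$'' is never defined. There is also no reason the swapped side should be an Eagon--Northcott complex: the natural object attached to $\Sym^b\Sym^aU$ is the Veronese of $\PP(\Sym^aU)$, which projects to the different variety $X_{(b^a)}$ of dimension $a$.

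The derived Hermite reciprocity that actually works has a different shape. Take the linear presentation $\varphi\colon \Sym^{d+b-2}U\otimes\mathcal{O}_{\PP^b}(-1)\to\Sym^dU\otimes\mathcal{O}_{\PP^b}$ of $\mathcal{O}_{\PP^b}(a)$ coming from the Jacobian pairing $\omega$; its kernel has rank $b-1$. Set $\mathcal{S}_r=\Sym^r(\varphi)$ and $\mathcal{W}_r=\bigwedge^r(\varphi^\vee(-1))$, with hats denoting augmentation by $\mathcal{H}^0$. Then $\alpha_b$ is an isomorphism iff $\widehat{\mathcal{S}}_b$ has zero hypercohomology. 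This vanishing follows from
\[
\widehat{\mathcal{S}}_b\simeq\widehat{\mathcal{S}}_{b-1}(a)\cong\widehat{\mathcal{W}}_{b-1}(a)\simeq\mathcal{W}_{b-2}(-1)[1],
\]
whose last term has acyclic terms. The middle step is an honest isomorphism of complexes $\Sym^{b-1}(\varphi)\cong\bigwedge^{b-1}(\varphi^\vee(-1))$. It is obtained from a general duality for constant-rank maps, together with Beilinson's equivalence and minimality of the complexes of global sections. None of this structure appears in your plan.

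\paragraph{The determinantal matrix (Step~2).} The matrix is not a catalecticant. Catalecticant minors define secant varieties of the rational normal curve and do not vanish on $X$: for $a=b=2$, the form $x_1^2x_2^2\in X$ has an invertible $3\times 3$ catalecticant. The correct matrix is the $(d+b-1)\times(b+1)$ matrix of $G\mapsto\omega(F,G)$. You also cannot check ``directly'' by counting that its maximal minors span $I(X)_{b+1}$: for $a=b=2$ there are $10$ minors but $\dim I(X)_3=7$. The paper gets exactness of
\[
\bigwedge^{b+1}\Sym^{d+b-2}U\to\Sym^{b+1}\Sym^dU\to\Sym^{a(b+1)}\Sym^bU\to 0
\]
from the vanishing of the hypercohomology of $\widehat{\mathcal{S}}_{b+1}$, identifying the relevant spectral-sequence differential with the maximal minors.

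\paragraph{The projective dimension (Step~3).} The top syzygy cannot come from $H^1$ of the top exterior power of $M_W$. Indeed $\bigwedge^dM_W\cong\mathcal{O}_{\PP^b}(-a)$ has no $H^1$. Moreover $R^{(a)}$ is Cohen--Macaulay of dimension $b+1$, so it has no syzygies beyond step $d-b$. The nonvanishing comes instead from the finite-length module $M$:
\[
\Tor^S_d(B,\CC)_{d+b}\cong\Tor^S_{d+1}(M,\CC)_{d+b},
\]
which is the socle of $M$ in degree $b-1$. This socle equals $M_{b-1}$ because $M_b=0$, and it has dimension $\binom{d-a+b}{b}-\binom{d+b-1}{b-1}>0$. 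This computation again requires the Foulkes--Howe maximal-rank statement, both for $M_b=0$ and for $B_{b-1}=\Sym^{b-1}\Sym^dU$.
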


We have excluded from Theorem~\ref{thm:Xab} the trivial cases $a=1$ (when $X=\PP^d$) and $b=1$ (when $X$ is a rational normal curve), which are \defi{arithmetically Cohen--Macaulay (ACM)}--i.e., their homogeneous coordinate ring satisfies the Cohen--Macaulay property. All cases covered by Theorem~\ref{thm:Xab} are not ACM, and in particular the linear resolution of $I(X)$ does not come from an Eagon--Northcott complex. The matrix of linear forms giving the determinantal structure for $X$ is highly non-generic, and we explain its construction following~\cite{abd-chi-HC}.

\begin{subequations}
We write $U=\bb{C}^2 = \bb{C}x_1\oplus\bb{C}x_2$, and consider the bilinear map
\begin{equation}\label{eq:bilinear-Jacobian}
 \omega \colon \Sym^d U \times \Sym^b U \lra \Sym^{d+b-2}U,\quad \omega(F,G) = \det\begin{bmatrix} F_{x_1} & F_{x_2} \\ G_{x_1} & G_{x_2}\end{bmatrix},
\end{equation}
where $H_{x_i}=\partial H / \partial x_i$ denotes the partial derivative of $H$ with respect to $x_i$. For $d=ab$, it follows from \cite{abd-chi-HC}*{Proposition~3.1} that for nonzero $F\in\Sym^d U$  and $G\in\Sym^b U$, we have
\begin{equation}\label{eq:Jac-criterion}
\omega(F,G) = 0 \Longleftrightarrow [F]=[G^a]\in\PP^d.
\end{equation}
We can now view $\omega$ as a family of linear maps parametrized by $F\in\Sym^d U$, and as such it gives rise to a sheaf morphism
\begin{equation}\label{eq:def-omega-onPd}
 \omega\colon \Sym^b U \oo \mc{O}_{\PP^d}(-1) \lra \Sym^{d+b-2} U \oo \mc{O}_{\PP^d},
\end{equation}
which by \eqref{eq:Jac-criterion} drops rank precisely along $X$. 
\end{subequations}
We prove in Lemma~\ref{lem:beta-surj} that the $(b+1)$-minors of \eqref{eq:def-omega-onPd} generate $I(X)$, and in Proposition~\ref{prop:explicit-betti} we determine all the algebraic Betti numbers of $X$. In particular, the number of minimal generators of $I(X)$ is given by
\[\dim_{\bb{C}} I(X)_{b+1} = {d+b+1\choose b+1}-{d+a+b\choose b}.\]
In the simplest case $a=b=2$ we get that $I(X)$ is generated by the $3$-minors of a $5\times 3$ matrix of linear forms. The Eagon--Northcott complex would then yield $10$ cubic generators that satisfy $9$ linear syzygies, but in fact the Betti table of $I(X)$ is
\[
\begin{array}{c|cccc}
       &0&1&2&3 \\ \hline
     3&7&10&5&1\\
\end{array}
\]
so that the linear map \eqref{eq:def-omega-onPd} is quite degenerate.

\begin{subequations}
Part of Theorem~\ref{thm:Xab} is the assertion that $I(X)$ contains no equations of degree $\leq b$. To understand this, note that $X$ can be viewed as the image of a morphism (which is in fact a closed immersion)
\begin{equation}\label{eq:param-nua-forX}
 \nu_a \colon \PP^b \lra \PP^{d},\quad [G] \mapsto [G^a].
\end{equation}
This parametrization \eqref{eq:param-nua-forX} induces for each $k$ a restriction map
\begin{equation}\label{eq:def-alfak}
\xymatrix{
H^0\left(\PP^d,\mc{O}_{\PP^d}(k)\right) \ar[r]^-{\alpha_k} \ar@{=}[d] & H^0\left(\PP^b,\mc{O}_{\PP^b}(ak)\right) \ar@{=}[d]\\  
\Sym^k(\Sym^{ab} U) \ar[r]^-{\alpha_k} & \Sym^{ak} (\Sym^b U).
}
\end{equation}
Following the historical remarks of \cite{abd-chi-BG-loci}, we will refer to $\alpha_k$ as the \defi{Foulkes--Howe map}.
\end{subequations}

\begin{theorem}\label{thm:Foulkes-Howe-map}
 The Foulkes--Howe map \eqref{eq:def-alfak} is injective for $k\leq b$ and surjective for $k\geq b$.
\end{theorem}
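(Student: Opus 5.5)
The key observation is that at $k=b$ the source and target are abstractly isomorphic $\SL_2$-representations by Hermite reciprocity, $\Sym^b(\Sym^{ab}U)\cong\Sym^{ab}(\Sym^b U)$. So at $k=b$ it suffices to prove either injectivity or surjectivity; the other follows by comparing dimensions. The two halves of the theorem then come from propagating the case $k=b$ in the two directions, using the ring structure of the restriction maps.

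For surjectivity when $k\ge b$, view $\alpha=\bigoplus_k\alpha_k$ as the ring homomorphism $\Sym^\bullet(\Sym^{ab}U)\to\bigoplus_k \Sym^{ak}(\Sym^bU)$. The target is the $a$-th Veronese subring of the polynomial ring $S=\Sym^\bullet(\Sym^b U)$. This Veronese subring is generated in degree one, since $\Sym^{ak}$ of a vector space is spanned by products of $k$ elements of $\Sym^a$. Hence $\Sym^{a(k+1)}(\Sym^bU)=\Sym^a(\Sym^bU)\cdot\Sym^{ak}(\Sym^bU)$. The image of $\alpha$ is a subring, but it does not contain all of $\Sym^a(\Sym^bU)$ in degree one; it contains only $\alpha_1(\Sym^{ab}U)$. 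So I would argue instead as follows. Once $\alpha_b$ is surjective, the image in degree $b+j$ contains $\Sym^{ab}(\Sym^b U)\cdot \alpha_j(\Sym^j(\Sym^{ab}U))$. This equals the full ideal product only if the linear system $\nu_a^*|\mathcal O(1)|$ generates appropriately. Concretely, surjectivity for $k\ge b$ amounts to the statement that the module $\bigoplus_j \Sym^{a(b+j)}(\Sym^bU)$ over $\Sym^\bullet(\Sym^{ab}U)$ is generated in degree $0$. I would deduce this from a Castelnuovo--Mumford regularity statement: the finite map $\nu_a$ makes $\nu_{a*}\mathcal O_{\PP^b}$ a sheaf on $\PP^d$, and $H^i(\PP^b,\mathcal O(a(b-i)))=0$ for $i>0$ (only $i=b$ could contribute, and $a(b-b)=0>-b-1$). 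Thus $\nu_{a*}\mathcal O_{\PP^b}(ab)$ is $0$-regular, hence globally generated with surjective multiplication maps. Therefore $\alpha_k$ surjective at $k=b$ propagates to all $k\ge b$.

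For injectivity when $k\le b$, I would use the fact that if $\alpha_k$ has a nonzero kernel element $f$ with $k<b$, then $f\cdot \Sym^{b-k}(\Sym^{ab}U)$ lies in $\ker\alpha_b$, and this set is nonzero because the polynomial ring is a domain. So injectivity at $k=b$ implies injectivity for all $k\le b$.

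Everything therefore reduces to showing that $\alpha_b$ is an isomorphism, equivalently that $X$ lies on no hypersurface of degree $b$. This is the main obstacle. My first attempt would be to show surjectivity of $\alpha_b$ directly: $\Sym^{ab}(\Sym^bU)$ is spanned by powers $\ell^{ab}$ of linear forms $\ell$ on $\Sym^bU^*$, i.e.\ evaluations at points $[G^\vee]$. So it suffices to show that the images of $\alpha_b$ separate enough, meaning dually that no nonzero element of $\Sym^{ab}(\Sym^bU^*)$ annihilates $\operatorname{im}\alpha_b$. Alternatively, I would compute $\alpha_b$ by specializing $G$ to $\prod(x_1-t_ix_2)$: then $G^a$ has coordinates given by elementary symmetric functions of the roots taken with multiplicity $a$. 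Injectivity of $\alpha_b$ becomes the statement that the degree-$b$ polynomials in the coefficients of $\prod_i(x_1-t_ix_2)^a$ are linearly independent as functions of $t_1,\dots,t_b$. This is a Foulkes--Howe type statement and could be attacked by a leading-term (triangularity) argument in the $t_i$, or by analysing highest weight vectors for each isotypic component and checking that each maps nonzero. I expect this last step to be the genuinely hard part, possibly requiring the derived Hermite reciprocity advertised in the abstract.
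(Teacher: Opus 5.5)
There is a genuine gap. The bijectivity of $\alpha_b$ is the entire content of the theorem, and your proposal never proves it.

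Your reductions around that step are fine:

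\begin{itemize}
\item The domain argument for $k<b$ is the paper's.
\item The dimension count at $k=b$ is correct.
\item Your propagation of surjectivity to $k>b$ is a valid alternative to the paper's. The sheaf $\nu_{a*}\mathcal{O}_{\PP^b}$ is $b$-regular on $\PP^d$, since only $H^b(\PP^b,\mathcal{O}_{\PP^b})=0$ could obstruct. So by Mumford's lemma, products of $\Sym^{ab}(\Sym^b U)$ with $j$ elements of $\alpha_1(\Sym^{ab}U)$ span $\Sym^{a(b+j)}(\Sym^b U)$. The paper instead reruns the chain $\widehat{\mc{S}}_k^\bullet\simeq\cdots\simeq\mc{W}_{b-2}^\bullet((k-b)a-1)[1]$ for each $k$.
\end{itemize}

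Your last paragraph, however, only lists reformulations of the case $k=b$. None of them is carried out, and none is likely to work as stated:

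\begin{itemize}
\item Hermite reciprocity gives only an abstract isomorphism, and $\alpha_b$ is not the Hermite map. For $a=b=2$ the paper finds scalars $(1,-\tfrac13,\tfrac43)$ for $\alpha_2$ versus $(1,\tfrac14,\tfrac12)$ for $h_{2,4}$.
\item You would therefore have to show that $\alpha_b$ is injective on every highest weight space. For $b\ge 3$ these need not be one-dimensional; for example $\Sym^6 U$ occurs twice in $\Sym^3(\Sym^6 U)$. This is a rank problem with no evident triangular structure.
\item Even the multiplicity-free case $b=2$ is the elaborate theorem of Abdesselam--Chipalkatti.
\item The ``separating points'' and ``specialize to roots'' versions simply restate surjectivity and injectivity.
\end{itemize}

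The missing idea is cohomological. Take the equivariant linear presentation $\varphi\colon\Sym^{d+b-2}U\oo\mathcal{O}_{\PP^b}(-1)\to\Sym^d U\oo\mathcal{O}_{\PP^b}$ of $\mathcal{O}_{\PP^b}(a)$ coming from the Jacobian pairing; its kernel $\mathcal{F}$ has rank $b-1$. Then $\alpha_b$ is $H^0$ of the augmentation $\mc{S}_b^0\to\mathcal{O}_{\PP^b}(ab)$ of $\mc{S}_b^\bullet=\Sym^b(\varphi)$. The other terms are sums of $\mathcal{O}_{\PP^b}(-i)$ with $1\le i\le b$, which are acyclic. Hence $\ker\alpha_b=\mathbb{H}^0(\widehat{\mc{S}}_b^\bullet)$ and $\coker\alpha_b=\mathbb{H}^1(\widehat{\mc{S}}_b^\bullet)$.

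The paper then proves three comparisons:
\begin{itemize}
\item $\widehat{\mc{S}}_b^\bullet\simeq\widehat{\mc{S}}_{b-1}^\bullet(a)$, because the kernel of $\mc{S}_b^\bullet\to\mc{S}_{b-1}^\bullet(a)$ is a Koszul complex that is exact since $\bw^b\mathcal{F}=0$.
\item $\mc{S}_{b-1}^\bullet\cong\mc{W}_{b-1}^\bullet=\bw^{b-1}(\varphi^\vee(-1))$. This is derived Hermite reciprocity. It comes from a general perfect pairing for symmetric powers of constant-rank two-term complexes with line-bundle cokernel. That derived isomorphism is upgraded to an isomorphism of complexes via Beilinson's equivalence and minimality.
\item $\widehat{\mc{W}}_{b-1}^\bullet\simeq\mc{W}_{b-2}^\bullet(-a-1)[1]$, using $\ker\varphi^\vee=\mathcal{O}_{\PP^b}(-a-1)$.
\end{itemize}

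Altogether $\widehat{\mc{S}}_b^\bullet\simeq\mc{W}_{b-2}^\bullet(-1)[1]$. Its terms are sums of $\mathcal{O}_{\PP^b}(-1-i)$ with $0\le i\le b-2$, all acyclic, so the hypercohomology vanishes and $\alpha_b$ is an isomorphism.

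Without this step, or a genuine substitute, your proposal shows only that the case $k=b$ implies the theorem for all $k$.
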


This resolves \cite[Conjecture 1]{AC} (see also \cite{bergeron} for related problems). Theorem~\ref{thm:Foulkes-Howe-map} reduces to proving that for $k=b$ the map
\begin{equation}\label{eq:weird-Hermite}
\alpha_b\colon \Sym^b(\Sym^d U) \lra \Sym^d(\Sym^b U)
\end{equation}
is an isomorphism. It is well-known that the source and target of $\alpha_b$ are abstractly isomorphic as $\GL_2$-representations, a fact usually referred to as Hermite reciprocity \cite{hermite}. Hermite provided an explicit isomorphism, which has many incarnations illustrated in \cite{rai-sam}, but the map $\alpha_b$ is not the Hermite map (see Section~\ref{sec:differentHermite}). In the special case $b=2$, Theorem~\ref{thm:Foulkes-Howe-map} is essentially equivalent to \cite{abd-chi-BG-loci}*{Theorem~1.1}, whose proof is quite elaborate.

Our approach to Theorem~\ref{thm:Foulkes-Howe-map} is cohomological and is illustrated next in a special case. We identify throughout $U^{\vee}=U$ via the perfect pairing $U\times U\lra\bw^2 U \cong \CC$. We can then use \eqref{eq:bilinear-Jacobian} to construct a sheaf map
\begin{equation}\label{eq:def-phi}
 \varphi \colon  \Sym^{d+b-2}U\oo \mc{O}_{\PP^b}(-1) \lra \Sym^d U \oo \mc{O}_{\PP^b},
\end{equation}
whose cokernel is the line bundle $\mc{O}_{\PP^b}(a)$ (see Section~\ref{subsec:presen-Oa}). We can view \eqref{eq:def-phi} as a $2$-term complex concentrated in cohomological degrees $-1,0$, and form its $b$-fold symmetric power $\mc{S}_b^{\bullet}$, where
\[ 
\mc{S}_b^{-i} = \bw^i(\Sym^{d+b-2}U)\oo\Sym^{b-i}(\Sym^d U)\oo \mc{O}_{\PP^b}(-i)\quad\text{ for }i=0,\dots,b.
\]
The degree $0$ cohomology sheaf of $\mc{S}_b^{\bullet}$ is $\mc{H}^0\left(\mc{S}_b^{\bullet}\right) = \mc{O}_{\PP^b}(d)$ and the induced map
\[ H^0\left(\PP^b,\mc{S}_b^0\right) \lra H^0\left(\PP^b,\mc{O}_{\PP^b}(d)\right)\]
is the map \eqref{eq:weird-Hermite}. The main technical difficulty in our work is to prove that this induced map is an isomorphism, but the case $b=2$ is deceptively simple and we explain it here. We augment the complex $\mc{S}_b^{\bullet}$ with the natural map $\mc{S}_b^0\lra\mc{H}^0\left(\mc{S}_b^{\bullet}\right)$, and get a complex $\widehat{\mc{S}_b^{\bullet}}$ whose cohomology sheaves are concentrated in negative degrees. For $b=2$ it takes the form
\[0 \to \bw^2(\Sym^d U)(-2) \to \Sym^d U \oo \Sym^d U(-1) \to \Sym^2(\Sym^d U) \oo \mc{O}_{\PP^2} \to \mc{O}_{\PP^2}(d) \to 0,\]
and since $\mc{O}_{\PP^2}(-2)$ and $\mc{O}_{\PP^2}(-1)$ are acyclic, the interesting hypercohomology groups of $\widehat{\mc{S}_2^{\bullet}}$ are computed by
\[ \bb{H}^0\left(\widehat{\mc{S}_2^{\bullet}}\right) = \ker(\alpha_2) \quad\text{and}\quad \bb{H}^1\left(\widehat{\mc{S}_2^{\bullet}}\right) = \coker(\alpha_2) .\]
Now the map \eqref{eq:def-phi} has kernel $\mc{O}_{\PP^2}(-a-1)$, hence the complex $\widehat{\mc{S}_2^{\bullet}}$ has a unique non-zero cohomology sheaf, namely $\mc{H}^{-1}\left(\widehat{\mc{S}_2^{\bullet}}\right) = \mc{O}_{\PP^2}(-1)$. Since $\mc{O}_{\PP^2}(-1)$ is acyclic, this implies the hypercohomology of $\widehat{\mc{S}_2^{\bullet}}$ vanishes identically, hence $\alpha_2$ is an isomorphism.

For a general $b$, the same argument shows that \eqref{eq:weird-Hermite} is an isomorphism if and only if the hypercohomology groups of $\widehat{\mc{S}_b^{\bullet}}$ vanish, but establishing this is more delicate. It requires establishing an isomorphism of complexes
\[
\Sym^{b-1}(\varphi) \cong \bigwedge^{b-1}(\varphi^\vee(-1))
\] 
which we refer to as \defi{derived Hermite reciprocity} since term-by-term this can be viewed as a variant of Hermite reciprocity, see Section~\ref{sec:char-comp} (however, compatibility with the differential is the most difficult and important point). This can be rephrased as saying that the $(b-1)$-fold symmetric power complex $\mc{S}_{b-1}^{\bullet}$ is isomorphic to its dual (up to a cohomological shift, and a twist by $\mc{O}_{\PP^b}(b-1)$, see Theorem~\ref{thm:self-duality}). When $b=2$, this is because the map \eqref{eq:def-phi} is self-adjoint, but in general it is more subtle, and it follows from a more general duality theorem that we prove for constant rank morphisms between vector bundles, see Section~\ref{sec:general-dual}. For a similar self-duality statement in the context of Hermite reciprocity see \cite{rai-sam}*{Section~5}, but unlike in loc. cit., the complex $\mc{S}_{b-1}^{\bullet}$ is no longer a resolution (see Section~\ref{sec:self-duality}).

The map \eqref{eq:param-nua-forX} is defined by the linear series $\Sym^d U \subset H^0\left(\PP^b,\mc{O}_{\PP^b}(a)\right)$, where the inclusion comes from the unique copy of $\Sym^d U$ inside the plethysm $\Sym^a(\Sym^b U)$, and we can view \eqref{eq:def-phi} as describing the (linear) syzygies between the sections in $\Sym^d U$. We write $S=\Sym(\Sym^b U)$, and let $I_{a,b} = \langle \Sym^d U\rangle$ denote the corresponding primary ideal.

\begin{theorem}\label{thm:reg-Iab}
 The Castelnuovo--Mumford regularity of $I_{a,b}$ is 
 \[ \op{reg}(I_{a,b}) = \left\lfloor\frac{b+2}{2}\right\rfloor\cdot a-\left\lfloor\frac{b}{2}\right\rfloor.\] 
\end{theorem}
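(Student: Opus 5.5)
Since $I_{a,b}$ is generated by $\Sym^d U$, a subspace of $H^0(\PP^b,\mc{O}_{\PP^b}(a))$, I plan to compute its regularity from the sheaf-theoretic Koszul/Eagon--Northcott picture on $\PP^b$. The surjection $\Sym^d U\oo\mc{O}_{\PP^b}\onto\mc{O}_{\PP^b}(a)$ has kernel $\mc{K}$. By the presentation \eqref{eq:def-phi}, $\mc{K}$ is the image of $\varphi$, and it sits in
\[0\to\mc{O}_{\PP^b}(-a-1)\to\Sym^{d+b-2}U\oo\mc{O}_{\PP^b}(-1)\to\mc{K}\to 0.\]
I would first record how the regularity is read off: the Tor modules $\Tor_i(I_{a,b},\CC)_j$, or equivalently the Koszul cohomology of the graded module $\bigoplus_k \op{image}(\Sym^k(\Sym^d U)\to H^0(\mc{O}(ak)))$ over $\Sym(\Sym^b U)$, are controlled by the hypercohomology of the complexes $\widehat{\mc{S}_k^{\bullet}}(j)$ and their twists. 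These complexes are the $k$-th symmetric powers of $\varphi$ augmented to $\mc{O}(ak)$. Concretely, $I_{a,b}$ has regularity $\le r$ iff $H^1$ of the twisted kernel sheaves vanishes appropriately, and I'd express $H^i(\PP^b,\bw^{p}\mc{K}(t))$ through the Koszul complex of the map $\Sym^d U\oo\mc{O}\to\mc{O}(a)$, whose kernel wedge powers resolve $\mc{O}(a\cdot)$-pieces. The saturation defect (local cohomology $H^0_{\mf m},H^1_{\mf m}$ of $S/I_{a,b}$) is where the failure of $\alpha_k$ to be surjective enters. For $k\ge b$ Theorem~\ref{thm:Foulkes-Howe-map} gives surjectivity, so the saturation is $I(X)$-like, but in low degrees the quotient has finite-length pieces.

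The key computation is the cohomology of the symmetric powers $\mc{S}_k^\bullet$ of the two-term complex $\varphi$. I would filter $\Sym^k$ of the cone using the short exact sequence above. The cohomology sheaves of $\mc{S}_k^\bullet$ are $\bw^{i}$ of the kernel $\mc{O}(-a-1)$ tensored with symmetric powers of the cokernel, i.e.\ only $\mc{H}^0=\mc{O}(ak)$ and $\mc{H}^{-1}=\mc{O}(a(k-1)-a-1)=\mc{O}(a(k-2)-1)$, because the kernel is a line bundle. Hypercohomology spectral sequences then reduce everything to cohomology of line bundles on $\PP^b$ together with the terms $\bw^i(\Sym^{d+b-2}U)\oo\Sym^{k-i}(\Sym^dU)\oo\mc{O}(-i)$. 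The latter have only top cohomology at $i\ge b+1$. The interplay between $H^b(\mc{O}(-i))$ for $i\ge b+1$ and $H^0$ of the line bundles yields nonvanishing Tor exactly in certain degrees. Using derived Hermite reciprocity (Theorem~\ref{thm:self-duality}), i.e.\ the self-duality $\mc{S}_{b-1}^\bullet\cong(\mc{S}_{b-1}^\bullet)^\vee[\cdot](b-1)$, I would pair the contributions from $H^0$ and $H^b$. This should show that the extremal nonzero Betti number occurs at homological index roughly $b/2$, giving the floor expressions $\lfloor (b+2)/2\rfloor a-\lfloor b/2\rfloor$.

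I would then prove the upper bound by vanishing of the relevant hypercohomology for degrees beyond this value, using Kodaira/Bott vanishing for $\mc{O}(t)$ on $\PP^b$. For the lower bound, I would exhibit an explicit nonzero class: take $k=\lfloor b/2\rfloor+1$, so that $ak-\lfloor b/2\rfloor$ matches. Then check that a connecting map involving $H^{b}(\mc{O}(-a(k-2)-\dots))$ is nonzero, via an $\SL_2$-character computation showing the relevant representation appears in the source but not in the target. The parity split between $b$ even and $b$ odd is handled separately, since the middle term of the self-dual complex behaves differently in the two cases. The main obstacle is precisely this nonvanishing, i.e.\ showing that a specific differential in the spectral sequence is not surjective. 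I would first try to settle it by comparing $\GL_2$-characters of source and target, using Hermite-type identities from Section~\ref{sec:char-comp}.
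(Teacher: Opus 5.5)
There is a genuine gap, and it starts with a false premise. You take $\ker\varphi=\mc{O}_{\PP^b}(-a-1)$, but $\varphi$ goes from a bundle of rank $d+b-1$ to one of rank $d+1$ with rank-one cokernel. So its kernel is the bundle $\mc{F}$ of \eqref{eq:pres-Oa}, of rank $b-1$. The line bundle $\mc{O}_{\PP^b}(-a-1)$ is the kernel of the dual map $\varphi^\vee$, and the two coincide only for $b=2$. Consequently $\mc{S}_k^\bullet$ has cohomology sheaves $\bw^i\mc{F}\otimes\mc{O}_{\PP^b}((k-i)a)$ for $0\le i\le\min(k,b-1)$, and your reduction to cohomology of line bundles plus Bott vanishing does not go through.

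The premise in fact proves too much. You are right that, since $I_{a,b}$ is $\mf{m}$-primary, everything is governed by $(S/I_{a,b})_k\cong H^1(\mc{K}(k-a))$ with $\mc{K}=\op{im}\varphi$. But your sequence would give $H^1(\mc{K}(t))\hookrightarrow H^2(\mc{O}_{\PP^b}(t-a-1))=0$ for $b\ge 3$, i.e.\ $S/I_{a,b}=0$. The correct statement is $(S/I_{a,b})_k\cong H^2(\mc{F}(k-a))$ for $b\ge 3$. So the whole content of the theorem is the intermediate cohomology of the rank $b-1$ bundle $\mc{F}$ (for $b=3$, $\mc{F}^\vee(-a-1)$ is the instanton bundle of Section~\ref{subsec:syzygies-Ia3}), and nothing in your outline computes it.

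The other ingredients do not target this quantity either. The complexes $\widehat{\mc{S}}_k^\bullet$ encode the Foulkes--Howe maps $\alpha_k$, i.e.\ the pieces $(I_{a,b}^k)_{ak}$ making up the coordinate ring of $X\subset\PP^d$, which is a module over $\Sym(\Sym^d U)$. It is not the object whose Koszul cohomology gives $\Tor^S_\bullet(I_{a,b},\CC)$. Likewise, the self-duality of $\mc{S}_{b-1}^\bullet$ is what yields the Foulkes--Howe statement and the resolution of $I_{a,b}^{b-1}$ (Theorem~\ref{thm:res-Iab-b-1}), not information about $I_{a,b}$ itself.

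Since $S/I_{a,b}$ is Artinian, $\reg(I_{a,b})=1+\max\{k:(S/I_{a,b})_k\neq 0\}$, and this is always attained at homological index $b$ via the top-degree socle. So locating an extremal Betti number near index $b/2$ is beside the point. Your lower bound, an unspecified connecting map shown nonzero by an unspecified character comparison, is not yet an argument.

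The missing idea is the one the paper uses. By \cite{weyman-mult}*{Theorem 2.4}, the generators of $I_{a,b}$ form a Gr\"obner basis whose initial ideal is generated by the $u_i^a$ and the $u_i^j u_{i+1}^{a-j}$. Hence the standard monomials $u_0^{d_0}\cdots u_b^{d_b}$ are exactly those with $d_i+d_{i+1}\le a-1$ for all $i$. Grouping adjacent exponents bounds their degree by $\lfloor (b+2)/2\rfloor (a-1)$, and the bound is attained by $u_0^{a-1}u_2^{a-1}u_4^{a-1}\cdots$. This gives $\reg(I_{a,b})=\lfloor (b+2)/2\rfloor(a-1)+1$, which is the stated formula.
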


We would like to understand the minimal free resolution of $I_{a,b}$, especially as it connects via Hermite reciprocity with coincident root loci $X_\lambda$ where $\lambda = (p, 1^{b-p})$ is a hook partition (see Section 2 of \cite{weyman-mult} and in particular \cite{weyman-mult}*{Theorem~2}). More generally, a natural problem in elimination theory / implicitization is to understand powers of the base ideal $I_{a,b}$ and the corresponding Rees algebra. As a first step we make the following conjecture.
\begin{conjecture} \label{conj:reg-pow-conj}
For $1 \le j\le b$, we have
\begin{equation*}
\op{reg}(I_{a,b}^j) = \left\lfloor \frac{b+j+1}{2}\right\rfloor a-\left\lfloor \frac{b-j+1}{2}\right\rfloor,
\end{equation*}
and the minimal generators of $I_{a,b}^j$ form a Gr\"obner basis. 
\end{conjecture}

When $j=1$ this is proved in Theorem~\ref{thm:reg-Iab}, while for $j=b$ it is a consequence of the isomorphism \eqref{eq:weird-Hermite}, which shows that $I_{a,b}^b = \mf{m}^d$ for $\mf{m}$ the maximal homogeneous ideal in $S$. We verify the regularity formula for $j=b-1$ in Theorem~\ref{thm:res-Iab-b-1}, where we describe the full Betti table for $I_{a,b}^{b-1}$.

The resolution of $I_{a,b}$ is easy to describe for small values of $b$, but it is unknown in general. When $b=1$ we have that $I_{a,1}=\mf{m}^a$ is a power of the maximal ideal, which has a Hilbert--Burch resolution
\[ 0 \lla I_{a,1} \lla S(-a)^{a+1} \overset{\varphi}{\lla} S(-a-1)^a \lla 0\]
where $\varphi$ is the map of free modules induced by \eqref{eq:def-phi} with $b=1$ and $d=a$. Similarly, when $b=2$ we get a Buchsbaum--Eisenbud resolution
\[ 0 \lla I_{a,2} \lla S(-a)^{2a+1} \overset{\varphi}{\lla} S(-a-1)^{2a+1} \lla S(-2a-1) \lla 0\]
which follows from the special case $b=2$, $d=2a$ of \eqref{eq:def-phi}, when $\ker(\varphi)=\mc{O}_{\PP^2}(-a-1)$ as noted earlier. In the case $b=3$ we prove the following.

\begin{theorem}\label{thm:res-Ia3}
 The minimal free resolution for the ideals $I_{a,3}$ has the form
 \begin{align*} 0\lla I_{a,3} \lla S(-a)^{3a+1} \overset{\varphi}{\lla} S(-a-1)^{3a+2} \oplus S(-2a)^{a+1\choose 2} \qquad \\
 \lla S(-2a-1)^{a^2+a+2} \lla S(-2a-2)^{a+1\choose 2} \lla 0. 
 \end{align*}
\end{theorem}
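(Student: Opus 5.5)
We work on $\PP^3=\PP(\Sym^3 U)$ with $d=3a$ and sheafify the resolution. Since $G^a=0$ forces $G=0$, the linear series $\Sym^{3a}U\subset H^0(\PP^3,\mc{O}(a))$ is base point free. Hence $I_{a,3}$ is $\mf{m}$-primary, and its minimal resolution has length $4=\dim S$, as in the statement. Section~\ref{subsec:presen-Oa} gives the presentation \eqref{eq:def-phi} of $\mc{O}_{\PP^3}(a)$. We complete it to an exact sequence of vector bundles
\[0\to \mc{K}\to \Sym^{3a+1}U\oo\mc{O}(-1)\overset{\varphi}{\lra}\Sym^{3a}U\oo\mc{O}\lra \mc{O}(a)\to 0,\]
and set $\mc{M}=\ker(\Sym^{3a}U\oo\mc{O}\to\mc{O}(a))=\op{im}\varphi$. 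Counting ranks and first Chern classes, $\mc{K}$ is a rank $2$ bundle with $c_1(\mc{K})=-2a-2$. Therefore $\mc{K}\cong\mc{K}^\vee(-2a-2)$. We expect this self-duality to be an instance of the derived Hermite reciprocity $\Sym^{2}(\varphi)\cong\bw^2(\varphi^\vee(-1))$ for $b=3$.

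The plan is to build the resolution from graded modules of sections.
\begin{itemize}
\item The first syzygy module of $I_{a,3}$ is $Z=\ker(S(-a)^{3a+1}\to I_{a,3})$. Since $H^1_*(\mc{O})=0$, we have $Z=\Gamma_*(\mc{M})(-a)$.
\item From $0\to\mc{K}\to\Sym^{3a+1}U(-1)\to\mc{M}\to 0$ we get an exact sequence
\[S(-a-1)^{3a+2}\to Z\to H^1_*(\mc{K})(-a)\to 0,\]
together with $H^1_*(\mc{M})\cong H^2_*(\mc{K})$.
\item We also have $H^1_*(\mc{M})\cong (S/I_{a,3})(a)$ up to the obvious shift. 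This module has finite length, and by Theorem~\ref{thm:reg-Iab} it is concentrated in degrees bounded in terms of $\op{reg}(I_{a,3})=2a-1$.
\end{itemize}
By Serre duality and $\mc{K}\cong\mc{K}^\vee(-2a-2)$, we get $H^2(\mc{K}(j))\cong H^1(\mc{K}(-2a-2-j))^\vee$. Hence $H^1_*(\mc{K})$ and the Artinian module $S/I_{a,3}$ determine each other.

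The key computation is $H^1_*(\mc{K})$. We would show that it is a finite length module concentrated in a single degree, namely $j=a$, where it has dimension $\binom{a+1}{2}$. We would first try to compute $H^1(\mc{K}(j))$ by chasing the hypercohomology of the complex $\varphi$ twisted by $\mc{O}(j)$. The other ingredient is the Hilbert function of $S/I_{a,3}$. Its values are $\dim\Sym^k(\Sym^3U)-\dim\Sym^{3k}$ of the image of the Foulkes--Howe type map, and Theorem~\ref{thm:Foulkes-Howe-map} controls the surjectivity range of that map. The dimension $\binom{a+1}{2}$ should match, as a $\GL_2$-representation, the cokernel of $\Sym^{3a+1}U\oo\Sym^{a-1}(\Sym^3U)\to H^0(\mc{M}(a))$ in the critical degree. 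We would verify this by a character computation in the spirit of Section~\ref{sec:char-comp}. We expect this step, pinning down $H^1_*(\mc{K})$ exactly (in particular its vanishing outside degree $a$), to be the main obstacle.

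Granting it, $\mc{K}$ is a rank $2$ bundle whose $H^1_*$ is a $\binom{a+1}{2}$-dimensional vector space in one degree. By Horrocks' correspondence this yields a resolution
\[0\to\mc{O}(-a-2)^{N}\to\mc{O}(-a-1)^{a^2+a+2-\text{(cancellation)}}\to\cdots\]
of null-correlation type for $\mc{K}$. Concretely, we obtain the minimal resolution of $\Gamma_*(\mc{K})$ and of the module $H^1_*(\mc{K})$, which is a direct sum of copies of $\CC$, each resolved by a Koszul complex on $4$ variables. We then assemble the resolution of $I_{a,3}$ as an iterated mapping cone: the Koszul pieces contribute the summand $S(-2a)^{\binom{a+1}{2}}$ among the first syzygies and $S(-2a-2)^{\binom{a+1}{2}}$ at the end. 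The term $S(-2a-1)^{a^2+a+2}$ is determined by an Euler characteristic (Hilbert series) count. Finally, we check minimality by degree considerations: all possible cancellations are between terms in different internal degrees, except those ruled out by $\op{reg}(I_{a,3})=2a-1$ from Theorem~\ref{thm:reg-Iab}. The Betti numbers then match the displayed resolution.
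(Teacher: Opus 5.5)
Your reduction is sound up to the point where you need $H^1_*(\mc{K})$. However, the claim you flag as the main obstacle is false for every $a\geq 2$, and your own Serre duality step shows this once the twist is corrected. With $\omega_{\PP^3}=\mc{O}(-4)$ and $\mc{K}^\vee\cong\mc{K}(2a+2)$, one gets $H^2(\mc{K}(j))\cong H^1(\mc{K}(2a-2-j))^\vee$, not $H^1(\mc{K}(-2a-2-j))^\vee$. Combined with your correct identification $H^2_*(\mc{K})\cong H^1_*(\mc{M})\cong (S/I_{a,3})(a)$, this gives
\[ H^1(\mc{K}(m))\cong \bigl((S/I_{a,3})_{3a-2-m}\bigr)^\vee . \]
The standard graded Artinian algebra $S/I_{a,3}$ is nonzero in every degree $0,1,\dots,2a-2$ (Theorem~\ref{thm:reg-Iab}). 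Hence $H^1_*(\mc{K})$ is nonzero in every degree $a,a+1,\dots,3a-2$; for $a=2$ its Hilbert function in degrees $2,3,4$ is $3,4,1$. Only the bottom piece $H^1(\mc{K}(a))\cong (S/I_{a,3})_{2a-2}^\vee$ has dimension $\binom{a+1}{2}$. The null-correlation picture you describe is the case $a=1$ alone. Even granting your claim, the assembly would not give the displayed shape. Koszul complexes on $N=\binom{a+1}{2}$ copies of $\CC$ in degree $2a$ contribute $S(-2a-1)^{4N}$, $S(-2a-2)^{6N}$, $S(-2a-3)^{4N}$, $S(-2a-4)^{N}$, not a final term $S(-2a-2)^{N}$.

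More fundamentally, the strategy is circular. $H^1_*(\mc{K})$ is, up to shift, the graded dual of $S/I_{a,3}$, so resolving it is the same problem as resolving $S/I_{a,3}$. A Hilbert series count determines only alternating sums of graded Betti numbers, and the regularity bound does not remove that ambiguity. The missing input is structural information about the rank $2$ bundle. The paper uses that $\mc{E}=\mc{K}^\vee(-a-1)\cong\mc{K}(a+1)$ is, by Faenzi's results, the $\SL_2$-equivariant instanton bundle with $c_2=\binom{a+1}{2}$. This bundle has a linear monad $0\to V_{-1}(-1)\to V_0\oo\mc{O}\to V_1(1)\to 0$, where $V_1=\bw^2(\Sym^a U)$, $V_{-1}=V_1^\vee$, and $V_0=\Sym^{2a+1}U\oplus\Sym^{a-2}U\oo\Sym^{a-1}U$ has dimension $a^2+a+2$. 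In your language, $H^1_*(\mc{E})$ is presented by $V_0\oo S\to V_1\oo S(1)$, so it is generated in one degree but not concentrated there. Dualizing the presentation of $\mc{O}(a)$ gives $0\to\mc{O}(-2a-1)\to\Sym^{3a}U\oo\mc{O}(-a-1)\to\Sym^{3a+1}U\oo\mc{O}(-a)\to\mc{E}\to 0$. Since $H^1(\mc{O}(a-1))=0$, the surjection onto $\mc{E}$ lifts into the monad, yielding a quasi-isomorphism. Its mapping cone, dualized and twisted by $\mc{O}(-2a-1)$, is an exact complex of sums of line bundles. Lemma~\ref{lem:induced-H0} turns that complex into the minimal resolution, and the second linear strand $V_{-1}\oo S(-2a)\lla V_0\oo S(-2a-1)\lla V_1\oo S(-2a-2)$ comes straight from the monad.
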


\noindent Notice that this recovers Theorem~\ref{thm:reg-Iab} for $b=3$, since $\reg(I_{a,3})=2a-1$. For a specific example, consider the case when $a=2$, when the Betti table of $I_{2,3}$ takes the form:
\[
\begin{array}{c|ccccccc}
       &0&1&2&3 \\ \hline
     2&7&8&-&-\\
     3&-&3&8&3\\
\end{array}
\]
The linear part of the first differential is defined again by \eqref{eq:def-phi}, and we will explain how the second linear strand arises from a monad for an $\SL_2$-invariant instanton bundle on~$\PP^3$, discussed in \cite{faenzi}. We will also make Theorem~\ref{thm:res-Ia3} more precise by exhibiting the syzygy modules as representations of $\SL_2$ (see Section~\ref{subsec:syzygies-Ia3}).

\subsection*{Acknowledgements}
 We thank Giorgio Ottaviani and Ethan Reed for helpful discussions. Experiments with the computer algebra software Macaulay2 \cite{GS} have provided numerous valuable insights. Raicu acknowledges the support of the National Science Foundation Grant DMS-2302341. Sam acknowledges the support of the National Science Foundation Grant DMS-2302149. Weyman was supported by the grants MAESTRO NCN-UMO-2019/34/A/ST1/00263 Research in Commutative Algebra and Representation Theory and OPUS 28 NCN-2024/55/B/ST1/01437 - The structure of perfect ideals and linkage. Yang acknowledges the support of the Arthur J. Schmitt Fellowship and the National Science Foundation Grant DMS-2302341.

\section{Preliminaries}\label{sec:prelim}

\subsection{Notation and conventions} 

Given a vector space $V$, $\bb{P}(V)$ denotes the projective space parametrizing 1-dimensional quotients of $V$. Throughout the paper we let $U=\CC^2$, we choose an identification $\bw^2 U = \bb{C}$, and use it to identify $U^{\vee}=U$ via the perfect pairing $U\times U\lra\bw^2 U=\CC$. We write $\PP^d = \bb{P}(\Sym^d U)$ for the projective space parametrizing degree $d$ binary forms up to scaling. All the maps we write down in relation to binary forms will be equivariant for the natural action of the special linear group $\SL(U)=\SL_2$. 

Given a sheaf $\mc{F}$ on a space $X$ we let $H^i(\mc{F})=H^i(X,\mc{F})$ denote its cohomology groups. Given a complex of sheaves $\mc{F}^{\bullet}$, we write 
\[
\mc{H}^i(\mc{F}^{\bullet}) = \frac{\ker(\mc{F}^i \to \mc{F}^{i+1})}{\mathrm{im}(\mc{F}^{i-1} \to \mc{F}^i)}
\]
for the $i$-th cohomology sheaf of the complex, and $\bb{H}^i(\mc{F}^{\bullet})=\bb{H}^i(X,\mc{F}^{\bullet})$ for the $i$-th hypercohomology group of $\mc{F}^{\bullet}$.

\subsection{Maximal minors and hypercohomology}\label{subsec:max-minors}

Consider finite dimensional vector spaces $V_1,V_0,W$, and a linear map
\[\gamma\colon  V_1 \lra V_0 \oo W.\]
Suppose that $\dim(W)=r+1$, so that $\bb{P}(W)\cong\PP^r$, and consider the map of sheaves
\[ V_1 \oo \mc{O}_{\PP^r}(-1) \lra V_0 \oo \mc{O}_{\PP^r}\]
induced by $\gamma$. Thinking of this map as a $2$-term cochain complex concentrated in degrees $-1$ and $0$, we can form its $(r+1)$-fold symmetric power $\Gamma^{\bullet}$, where
\[ 
\Gamma^{-i} = \bw^i V_1 \oo \Sym^{r+1-i}V_0 \oo \mc{O}_{\PP^r}(-i) \quad\text{ for }i=0,\dots,r+1.
\]
Note that all of the cohomology groups of  $\Gamma^{-i}$ vanish for $i=1,\dots,r$. Moreover, the only nonvanishing cohomology groups for $\Gamma^0$ and (if we assume that $\dim(V_1)\geq r+1$) $\Gamma^{-r-1}$ are:
\[ 
H^0(\PP^r,\Gamma^0) = \Sym^{r+1}V_0,\quad H^r(\PP^r,\Gamma^{r+1}) = \bw^{r+1}V_1 \oo \bw^{r+1}W^{\vee}.
\]
It follows from the hypercohomology spectral sequence for $\Gamma^{\bullet}$ that there exists a map (on page $r$ of the spectral sequence)
\begin{equation}\label{eq:map-dr}
 d_r \colon  \bw^{r+1}V_1 \oo \bw^{r+1}W^{\vee} \lra \Sym^{r+1}V_0,
\end{equation}
and we have
\[ \bb{H}^0(\Gamma^{\bullet}) = \coker(d_r),\quad \bb{H}^{-1}(\Gamma^{\bullet}) = \ker(d_r),\quad \bb{H}^i(\Gamma^{\bullet}) = 0\text{ for }i\neq -1,0.\]
To interpret the map $d_r$ concretely, we choose bases for $V_1,V_0,W$, and think of the basis of $V_0$ as the variables $x_1,x_2,\dots$ in a polynomial ring $\Sym(V_0)$. If $\dim(V_1) = m$ then we can interpret the map $\gamma$ as an $m\times (r+1)$ matrix of linear forms in $V_0$. 

\begin{proposition}
The image of $d_r$ is spanned by the $(r+1)\times(r+1)$ minors of $\gamma$. 
\end{proposition}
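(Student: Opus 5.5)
The plan is to identify $d_r$ with an explicit map by computing the hypercohomology of $\Gamma^{\bullet}$ in two ways. The first computation uses the spectral sequence in the statement. For the second, note that the whole construction is functorial in $V_0$. In particular, applying $\Sym^{r+1}$ to $V_1\oo\mc{O}(-1)\to V_0\oo\mc{O}$ is compatible with the maps induced by $\gamma$.

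First we reduce to a universal case. The map $\gamma$ factors as
\[V_1 \xrightarrow{\ \gamma'\ } V_1\oo W^{\vee\vee}\cdots,\]
or more precisely through the universal map. Set $V_0' = V_1\oo W^{\vee}$, so that $\gamma$ corresponds to $\gamma_0\colon V_0'\to V_0$ composed with the tautological map
\[V_1 \to V_0'\oo W = V_1\oo W^{\vee}\oo W.\]
By naturality of the spectral sequence, $d_r$ for $\gamma$ equals $\Sym^{r+1}(\gamma_0)\circ d_r^{\mathrm{univ}}$. Under $\Sym^{r+1}(\gamma_0)$, the $(r+1)$-minors of the generic $m\times(r+1)$ matrix (whose entries form a basis of $V_0'$) map to the $(r+1)$-minors of $\gamma$. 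It therefore suffices to treat the universal case. There, $GL(V_1)\times GL(W)$-equivariance helps: $\bw^{r+1}V_1\oo\bw^{r+1}W^{\vee}$ is irreducible and occurs in $\Sym^{r+1}(V_1\oo W^\vee)$ with multiplicity one, by Cauchy. Its copy there is exactly the span of the maximal minors. Hence $d_r^{\mathrm{univ}}$ is either zero or an isomorphism onto the span of the minors.

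It remains to show $d_r^{\mathrm{univ}}\neq 0$, and this is the main obstacle. Since $\bb{H}^{-1}(\Gamma^\bullet)=\ker d_r$, it suffices to show $\bb{H}^{-1}=0$ in the universal case. Equivalently, we can reduce to $m=r+1$ by functoriality in $V_1$, choosing a subspace. The map is then the identity-like map $W^{\vee}\oo\mc{O}(-1)\to (W^\vee\oo W)\oo\mc{O}$, twisted appropriately; the simplest choice is $V_1=W$, with $\gamma$ the map $W\to \op{End}$ coming from the identity. An alternative is the direct sum of $r+1$ copies of the Euler-type map. Concretely, I would take $V_1=W^{\vee}$ and $V_0=\CC$... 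Rather than rely on that, the cleanest route is this. The sheaf map $V_1\oo\mc{O}(-1)\to V_0\oo\mc{O}$ in the universal case is injective with locally free cokernel $Q$. Indeed, at a point $[w]$ it is $v\mapsto v\oo w$, which is injective. Then $\Gamma^\bullet$ is quasi-isomorphic to $\Sym^{r+1}Q$ placed in degree $0$, because symmetric powers of an injective bundle map resolve the symmetric power of the cokernel. Hence $\bb{H}^{-1}(\Gamma^\bullet)=H^{-1}(\Sym^{r+1}Q)=0$, so $d_r^{\mathrm{univ}}$ is injective, and in particular nonzero.

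Finally, by the naturality statement, the image of $d_r$ for $\gamma$ is $\Sym^{r+1}(\gamma_0)$ applied to the span of the generic minors, which is the span of the $(r+1)$-minors of $\gamma$. One point needs care: the identification of the minors' span with the Cauchy summand must be compatible with the chosen bases. This is immediate because the minors are $GL(V_1)\times GL(W)$-equivariant, and the summand has multiplicity one.
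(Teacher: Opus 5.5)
Your proposal is correct and follows the paper's route: factor $\gamma$ through the universal map $V_1\to (V_1\oo W^{\vee})\oo W$, use naturality of the hypercohomology spectral sequence to get $d_r=\Sym^{r+1}(\gamma_0)\circ d_r^{\mathrm{univ}}$, and identify the image in the universal case via equivariance and the multiplicity-one occurrence of $\bw^{r+1}V_1\oo\bw^{r+1}W^{\vee}$ in $\Sym^{r+1}(V_1\oo W^{\vee})$. Your argument that $d_r^{\mathrm{univ}}\neq 0$ goes beyond the paper, which only asserts it: the universal sheaf map is an injective bundle map, so $\Gamma^{\bullet}$ resolves $\Sym^{r+1}Q$ and $\ker d_r^{\mathrm{univ}}=\bb{H}^{-1}(\Gamma^{\bullet})=0$ (the abandoned false starts in your middle paragraph should simply be cut).
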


\begin{proof} 
Consider the universal example where $V_0 = V_1 \otimes W^\vee$ and $\gamma$ is the $\GL(V_1) \times \GL(W)$-invariant map coming from the evaluation $W^\vee \otimes W \to \CC$. Then $d_r$ is nonzero and is an equivariant map, and the representation $\bigwedge^{r+1} V_1 \otimes \bigwedge^{r+1} W^\vee$ appears with multiplicity $1$ in $\Sym^{r+1}(V_1 \otimes W^\vee)$, so the image of $d_r$ is the stated space of minors. Finally, we can relate general examples to this universal example by applying a base change.
\end{proof}

\subsection{Graded modules associated to coherent sheaves} \label{subsec:gradedmod}

We let $S$ denote the homogeneous coordinate ring of $\PP^b$, and to a coherent sheaf $\mc{E}$ on~$\PP^b$ we associate the graded $S$-modules
\[
H^i_{*}(\mc{E}) \coloneqq \bigoplus_{j\in\bb{Z}} H^i\left(\mc{E}\oo\mc{O}_{\PP^b}(j)\right),\quad\text{ for }i=0,\dots,b.
\]
One has that $H^0_{*}(\mc{E})$ is a free module when $\mc{E}$ is a direct sum of line bundles. The following lemma will be used in the proofs of Theorems~\ref{thm:res-Ia3} and~\ref{thm:res-Iab-b-1}.

\begin{lemma}\label{lem:induced-H0}
    Suppose that we have an exact complex of sheaves on $\PP^b$
    \[
    \mc{C}^\bullet \colon\quad 0 \lra \mc{C}^{-b} \lra \cdots \lra \mc{C}^{-1} \lra \mc{C}^0 \lra \mc{C}^1 \lra 0,
    \]
    where each term $\mc{C}^i$ is a direct sum of line bundles on $\PP^b$. The complex of graded $S$-modules
    \[
    H^0_{*} (\mc{C}^\bullet) \colon\quad 0 \lra H^0_{*}(\mc{C}^{-b}) \lra \cdots \lra H^0_{*}(\mc{C}^{-1}) \lra H^0_{*}(\mc{C}^0) \overset{\pd}{\lra} H^0_{*}(\mc{C}^{1}) 
    \]
    is acyclic, i.e., its cohomology is $0$ in degrees $\le 0$.
\end{lemma}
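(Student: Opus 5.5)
We will prove acyclicity by splitting the exact complex $\mc{C}^\bullet$ into short exact sequences and tracking the intermediate cohomology of the syzygy sheaves. For $-b\le i\le 1$ set $\mc{K}^i=\ker(\mc{C}^i\to\mc{C}^{i+1})$, so that $\mc{K}^{-b}=\mc{C}^{-b}$ and $\mc{K}^1=\mc{C}^1$. Exactness of $\mc{C}^\bullet$ gives short exact sequences
\[
0\lra \mc{K}^{i}\lra \mc{C}^{i}\lra \mc{K}^{i+1}\lra 0,\qquad i=-b,\dots,0,
\]
since $\mc{K}^{i+1}$ is the image of $\mc{C}^i$. Because $H^0_*$ is left exact, the cohomology of $H^0_*(\mc{C}^\bullet)$ at position $i\le 0$ is the cokernel of $H^0_*(\mc{C}^{i-1})\to H^0_*(\mc{K}^{i})$. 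By the long exact sequence, this cokernel injects into $H^1_*(\mc{K}^{i-1})$. At the leftmost position $i=-b$ there is nothing to check, because $H^0_*$ is left exact and $\mc{C}^{-b}=\mc{K}^{-b}$. So it suffices to prove $H^1_*(\mc{K}^{i})=0$ for $-b\le i\le -1$.

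For this we use the vanishing $H^j_*(\mc{L})=0$ for $1\le j\le b-1$, valid for any direct sum of line bundles $\mc{L}$ on $\PP^b$. We prove by ascending induction on $i$ that
\[
H^j_*(\mc{K}^i)=0\qquad\text{for }1\le j\le i+b .
\]
The base case $i=-b$ is vacuous. For the inductive step, take the sequence $0\to\mc{K}^{i}\to\mc{C}^{i}\to\mc{K}^{i+1}\to 0$ and $1\le j\le i+b+1$. The long exact sequence gives
\[
H^j_*(\mc{C}^i)\lra H^j_*(\mc{K}^{i+1})\lra H^{j+1}_*(\mc{K}^i).
\]
The right-hand term vanishes by induction, since $j+1\le i+b+1$ and $j+1\ge 2$, provided $j+1\le i+b$. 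The left-hand term vanishes whenever $j\le b-1$, which holds because $i+1\le -1$ in the range we need.

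The main obstacle is the index bookkeeping: we must make sure that the range $1\le j\le i+b$ propagates without hitting the top cohomology $H^b_*$ of the line bundles. If the inductive range falls short at the boundary, we will instead run the induction from the right-hand end. In that version we show $H^j_*(\mc{K}^{i})=0$ for $1\le j\le -i$ and $j\le b-1$, by descending induction starting from $\mc{K}^1=\mc{C}^1$ and $\mc{K}^0$. Each step uses
\[
H^{j-1}_*(\mc{K}^{i+1})\lra H^j_*(\mc{K}^i)\lra H^j_*(\mc{C}^i).
\]
For $1\le j\le b-1$ the right-hand term vanishes. For the left-hand term, we need $H^{j-1}_*(\mc{K}^{i+1})=0$ with $j-1\ge 1$, which is the inductive hypothesis for $\mc{K}^{i+1}$. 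The case $j=1$ is where the argument must stop: $H^0_*(\mc{K}^{i+1})$ does not vanish, so this step fails. This is exactly why the statement asserts exactness only in degrees $\le 0$ and says nothing about the surjectivity of $\pd$. We need $H^1_*(\mc{K}^{i})=0$ for $i\le -1$, which is precisely what the right-to-left induction must supply, using that the complex has length $b+1$ on $\PP^b$. We will pick whichever of the two inductions closes cleanly at the endpoints.
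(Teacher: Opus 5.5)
Your reduction is correct: the cohomology of $H^0_*(\mc{C}^\bullet)$ at a position $i\le 0$ embeds into $H^1_*(\mc{K}^{i-1})$, so everything comes down to showing $H^1_*(\mc{K}^i)=0$ for $-b\le i\le -1$. There is one slip: with your definition, $\mc{K}^{-b}=\ker(\mc{C}^{-b}\to\mc{C}^{-b+1})=0$ by exactness, and it is $\mc{K}^{-b+1}$ that is isomorphic to $\mc{C}^{-b}$. The real problem is that neither of your two inductions proves the needed vanishing, so the proposal has a gap at its only nontrivial step.

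In the ascending version, the range $1\le j\le i+b$ grows with $i$. But each short exact sequence only transfers vanishing from $H^{j+1}_*(\mc{K}^i)$ to $H^{j}_*(\mc{K}^{i+1})$, so the step fails for $j=i+b$ and $j=i+b+1$. Worse, the claim itself is false. For $b\ge 3$ the long exact sequences give $H^2_*(\mc{K}^{-1})\cong H^1_*(\mc{K}^0)\cong\coker H^0_*(\pd)$, which is typically nonzero. For instance, for the Koszul complex $0\to\mc{O}_{\PP^3}(-4)\to\cdots\to\mc{O}_{\PP^3}(-1)^4\to\mc{O}_{\PP^3}\to 0$ one has $\mc{K}^{-1}=\Omega^2_{\PP^3}$ and $H^2(\Omega^2_{\PP^3})\neq 0$. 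In the descending version, as you note yourself, the step at $j=1$ would need $H^0_*(\mc{K}^{i+1})=0$, which fails. Descending induction only gives vanishing for $2-i\le j\le b-1$ and never reaches $j=1$. Saying you will pick whichever induction closes is not a proof, since as formulated neither does.

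The range $1\le j\le -i$ that you wrote down is the right one, but it comes from ascending induction with a \emph{shrinking} range. Start from $\mc{K}^{-b}=0$. For $-b\le i\le -2$ and $1\le j\le -i-1$, use
\[ H^j_*(\mc{C}^i)\lra H^j_*(\mc{K}^{i+1})\lra H^{j+1}_*(\mc{K}^i). \]
The left term vanishes because $1\le j\le b-1$, and the right term vanishes by induction because $2\le j+1\le -i$. This gives $H^j_*(\mc{K}^i)=0$ for $1\le j\le -i$, in particular $H^1_*(\mc{K}^{-1})=0$. The length of the complex enters exactly here: the window $[1,b-1]$ of vanishing for $\mc{K}^{-b+1}\cong\mc{C}^{-b}$ loses one degree per step and is still nonempty at $\mc{K}^{-1}$.

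With this repair your argument is a correct, elementary unrolling of the paper's proof. The paper reads off the same conclusion from the hypercohomology spectral sequence. There $E_1^{-p,q}=H^q_*(\mc{C}^{-p})$ lives only in rows $q=0,b$ and the abutment is zero, so the only possible later differential is $d_{b+1}\colon E_{b+1}^{-b,b}\to E_{b+1}^{1,0}$. Hence row $q=0$ can have cohomology only at position $1$.
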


\begin{proof}
 Since $\mc{C}^{\bullet}$ is exact, the same is true about all twists $\mc{C}^\bullet(j)$, and therefore the corresponding hypercohomology groups vanish. We get a spectral sequence
    \[ 
    E^{-p,q}_1 = H^q_*(\mc{C}^{-p}) \quad\Longrightarrow\quad \bb{H}^{q-p}_*(\mc{C}^{\bullet}) = 0.
    \]
    Since each $\mc{C}^i$ is a direct sum of line bundles, $E^{-p,q}_1$ vanishes for $q\neq 0,b$. It follows that beyond the first page, the only potentially non-trivial map in the spectral sequence is
    \[ d_{b+1}^{-b,b}\colon E_{b+1}^{-b,b} \to E_{b+1}^{1,0}.\]
    It follows that the only non-zero cohomology of $E_1^{\bullet,0}$ may occur in cohomological degree $1$ (and must agree with the cohomology of $E_1^{\bullet,b}$ in cohomological degree $-b$), as desired.
\end{proof}

\section{A self-duality theorem}\label{sec:self-duality} 

\subsection{A general duality} \label{sec:general-dual}

Fix a non-negative integer $r$. We consider maps of vector bundles
\[
  \mathcal{A}^{-1} \overset{d}{\lra} \mathcal{A}^0
\]
on a locally ringed space $X$ such that:
\begin{enumerate}
\item $d$ has constant rank,
  
\item $\mathcal{E}_\mathcal{A}:=\ker d$ has rank $r$, and
\item $\mathcal{L}_\mathcal{A}:=\coker d$ is a line bundle.
\end{enumerate}
Morphisms of such maps $f \colon \mathcal{A} \to \mathcal{B}$ are defined to be commutative squares. A morphism $f$ induces maps $\mathcal{E}_A \to \mathcal{E}_B$ and $\mathcal{L}_A \to \mathcal{L}_B$. Let $\theta_\mathcal{A} \colon \mathcal{A} \to \mathcal{L}_A$ be the natural surjection and define $\delta_\mathcal{A}$ by
\[
\delta_{\mathcal{A}} \colon \mathcal{A} \otimes \mathcal{A} \lra \mathcal{A} \otimes \mathcal{L}_A, \qquad \delta_{\mathcal{A}} = (1 \otimes \theta) - \tau \circ (\theta \otimes 1)
\]
where $\tau \colon \mathcal{L}_\mathcal{A} \otimes \mathcal{A} \to \mathcal{A} \otimes \mathcal{L}_{\mathcal{A}}$ is the symmetry map. Since $\mathcal{L}_\mathcal{A}$ is a line bundle, the composition
\[\mc{A} \otimes \mc{A} \overset{\delta_\mc{A}}{\lra} \mc{A} \otimes \mc{L}_{\mc{A}} \overset{\theta_\mc{A} \otimes 1}{\lra} \mc{L}_{\mc{A}} \otimes \mc{L}_{\mc{A}}\]
is the zero map. Hence, the image is contained in $\mathcal{K}_\mathcal{A} \otimes \mathcal{L}_\mathcal{A}$ where $\mathcal{K}_\mathcal{A} = \ker(\theta_\mathcal{A})$. Note that $\delta_\mathcal{A}$ is functorial. Indeed, given $f \colon \mathcal{A} \to \mathcal{B}$, we have a commutative square
\[
  \xymatrix{ \mathcal{A} \otimes \mathcal{A} \ar[d]_-{f \otimes f} \ar[r]^-{\delta_\mathcal{A}} & \mathcal{K}_\mathcal{A} \otimes \mathcal{L}_\mathcal{A} \ar[d]^-{f \otimes f} \\
    \mathcal{B} \otimes \mathcal{B} \ar[r]^-{\delta_\mathcal{B}} & \mathcal{K}_\mathcal{B} \otimes \mathcal{L}_\mathcal{B}}.
\]

Let $D^r$ denote the $r$th divided power functor. The inclusion $\mathcal{E}[1] \to \mathcal{K}$ is a quasi-isomorphism; we define $\psi_\mathcal{A}$ in the derived category to be the composition
\begin{equation}\label{eq:psi}
     D^r(\mathcal{A}) \otimes D^r(\mathcal{A}) \lra D^r(\mathcal{A} \otimes \mathcal{A}) \xrightarrow{D^r(\delta_\mathcal{A})} D^r(\mathcal{K}_A \otimes \mathcal{L}_A) \xleftarrow{\cong} D^r(\mathcal{E}_\mathcal{A}[1] \otimes \mathcal{L}_\mathcal{A}).
\end{equation}
Note that the rightmost term simplifies to $\det(\mc{E}_{\mc{A}})[r] \otimes \mc{L}_{\mc{A}}^{\otimes r}$.

\begin{theorem} \label{thm:general-duality}
  $\psi_\mathcal{A}$ is a perfect pairing, i.e., the map given by adjunction
  \[D^r(\mathcal{A}) \lra (D^r \mathcal{A})^\vee \otimes \det(\mc{E}_{\mc{A}})[r] \otimes \mc{L}^{\otimes r}_{\mc{A}}\]
  is an isomorphism in the derived category of $X$.
\end{theorem}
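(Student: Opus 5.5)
The statement is local on $X$: $\psi_{\mc{A}}$ is a globally defined morphism in the derived category, and a morphism of bounded complexes of vector bundles is an isomorphism if it is one on each member of an open cover. So the plan is to reduce to a split local model, where the pairing can be written down explicitly.

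\textbf{Step 1: local splitting.} Since $d$ has constant rank, locally $\ker d=\mc{E}_{\mc{A}}$, $\operatorname{im} d$ and $\coker d=\mc{L}_{\mc{A}}$ are vector bundles, and all the relevant short exact sequences split. Hence, locally,
\[\mc{A}\cong \mc{E}[1]\oplus \mc{L}[0]\oplus (\mc{F}\xrightarrow{\ 1\ }\mc{F}),\qquad \mc{F}=\operatorname{im} d.\]
The projection onto $\mc{A}':=\mc{E}[1]\oplus\mc{L}$ (zero differential) is a morphism in our category, i.e.\ a commutative square. It is a quasi-isomorphism, and it induces the identity on $\mc{E}$ and on $\mc{L}$. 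By the functoriality of $\delta$ noted above, the pairings $\psi_{\mc{A}}$ and $\psi_{\mc{A}'}$ are compatible under $D^r(f)$. Moreover, $D^r(f)$ is a quasi-isomorphism. To see this, write $D^r(\mc{B}\oplus\mc{C})=\bigoplus_i D^i\mc{B}\otimes D^{r-i}\mc{C}$ and note that $D^j$ of the contractible complex $\mc{F}\xrightarrow{1}\mc{F}$ is exact for $j\ge 1$. This holds over $\bb{C}$, and more generally whenever $j!$ is invertible; in our applications $X$ lives over $\CC$. It therefore suffices to prove that $\psi_{\mc{A}'}$ is perfect.

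\textbf{Step 2: the split model.} For $\mc{A}'$ we have
\[D^r(\mc{A}')=\bigoplus_{i=0}^r D^i(\mc{E}[1])\otimes D^{r-i}\mc{L}=\bigoplus_i \bw^i\mc{E}\,[i]\otimes\mc{L}^{\otimes(r-i)},\]
and $\mc{K}=\mc{E}[1]$, $\theta=$ projection to $\mc{L}$. Then $\delta$ vanishes on $\mc{E}\otimes\mc{E}$ and on $\mc{L}\otimes\mc{L}$, while it sends $e\otimes\ell\mapsto e\otimes\ell$ and $\ell\otimes e\mapsto -(\pm e\otimes\ell)$, with the Koszul sign coming from $\tau$. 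Tracing through $D^r(\mc{A}'\otimes\mc{A}')\to D^r(\mc{E}[1]\otimes\mc{L})=\bw^r\mc{E}[r]\otimes\mc{L}^r$, the only surviving components pair the summand of index $i$ with the summand of index $r-i$. On these components the map is $c_i$ times the wedge product $\bw^i\mc{E}\otimes\bw^{r-i}\mc{E}\to\det\mc{E}$, tensored with the identification $\mc{L}^{r-i}\otimes\mc{L}^{i}=\mc{L}^r$. Each wedge pairing is perfect, so the adjoint map is block anti-diagonal with perfect blocks, and it is an isomorphism provided every $c_i$ is a unit.

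\textbf{Main obstacle.} The hard part is the bookkeeping in Step 2: computing the constants $c_i$, with signs, from the comultiplication $D^r(\mc{A})\otimes D^r(\mc{A})\to D^r(\mc{A}\otimes\mc{A})$ and from the Koszul signs in the graded divided and exterior powers. This requires checking that contributions from the two terms of $\delta$ do not cancel. I would first test the case $r=1$ (where $\psi$ is visibly $\pm$ the identity pairing $\mc{E}[1]\otimes\mc{L}\oplus\mc{L}\otimes\mc{E}[1]\to\mc{E}[1]\otimes\mc{L}$), and then do the general case using a local basis $e_1,\dots,e_r$ of $\mc{E}$ and a generator of $\mc{L}$. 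The expectation is that $c_i$ is, up to sign, a binomial coefficient $\binom{r}{i}$, which is invertible in characteristic $0$.
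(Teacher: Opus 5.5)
Your outline follows the paper's proof step for step. You pass to a local situation, split off the contractible summand $T\xrightarrow{1}T$, use functoriality of $\delta$ to reduce to $d=0$, and observe that only the components $D^r(\mc{A})^{-i}\otimes D^r(\mc{A})^{-(r-i)}$ can pair nontrivially. Your caveat that $D^r$ preserves this quasi-isomorphism only when the relevant factorials are invertible is correct, and the paper uses it silently. For $T$ of rank one, $D^p(T\xrightarrow{1}T)$ is $T\otimes D^{p-1}T\xrightarrow{\,p\,}D^pT$, which is not exact in characteristic $p$.

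However, you stop exactly at the step that carries the content of the theorem. You do not show that the constants $c_i$ are units: you only predict $c_i=\pm\binom{r}{i}$ and leave open whether the two terms of $\delta$ cancel. As written, the proposal does not prove that the pairing is perfect. The missing observation, which is how the paper finishes, is that there is nothing to cancel. Decompose
\[\mc{A}\otimes\mc{A}=(\mc{E}[1]\otimes\mc{E}[1])\oplus(\mc{E}[1]\otimes\mc{L})\oplus(\mc{L}\otimes\mc{E}[1])\oplus(\mc{L}\otimes\mc{L}).\]
Since $\theta$ kills $\mc{E}[1]$, only the term $1\otimes\theta$ of $\delta$ is nonzero on $\mc{E}[1]\otimes\mc{L}$, and only $-\tau\circ(\theta\otimes 1)$ is nonzero on $\mc{L}\otimes\mc{E}[1]$. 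So $\delta$ vanishes on the two outer summands and equals $(x,y)\mapsto x-\tau(y)$ on the middle ones; there is no Koszul sign in $\tau$, since $\mc{L}$ sits in degree $0$. Expand $D^r$ of this direct sum. Every summand involving a positive divided power of an outer summand is killed by $D^r(\delta)$. The component of the first map in \eqref{eq:psi} from $D^r(\mc{A})^{-i}\otimes D^r(\mc{A})^{-(r-i)}$ to $D^i(\mc{E}[1]\otimes\mc{L})\otimes D^{r-i}(\mc{L}\otimes\mc{E}[1])$ is an isomorphism. On this summand $D^r(\delta)$ is
\[x_1\cdots x_i\otimes y_1\cdots y_{r-i}\longmapsto(-1)^{r-i}x_1\cdots x_i\,\tau(y_1)\cdots\tau(y_{r-i}),\]
that is, $\pm$ the multiplication in the divided power algebra of the odd bundle $\mc{E}[1]\otimes\mc{L}$, which is its exterior algebra. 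So each block is $\pm$ the wedge pairing $\bw^i\mc{E}\otimes\bw^{r-i}\mc{E}\to\det\mc{E}$, tensored with $\mc{L}^{\otimes r}$. With the divided-power normalization $c_i=\pm1$; binomial coefficients appear only if you normalize $D^i(\mc{E}[1])\cong\bw^i\mc{E}[i]$ by the symmetrizing projector instead. Either way $c_i\neq 0$, so the adjoint map is an isomorphism.
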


\begin{proof}
    If $\phi \colon Y \to X$ is a morphism of locally ringed spaces, then $\phi^*(\psi_\mathcal{A})$ is naturally identified with $\psi_{\phi^*(\mathcal{A})}$. In particular, $\psi$ is compatible with pullback, and we may immediately reduce to the case where $X$ is the spectrum of a local ring. 
    
    For fixed $X$, it follows from construction that $\psi$ is functorial in $\mc{A}$. In particular, if $f \colon \mathcal{A} \to \mathcal{B}$ is a quasi-isomorphism, then $\psi_\mathcal{A}$ is a perfect pairing if and only if $\psi_{\mathcal{B}}$ is. Since $X$ is the spectrum of a local ring, $\mathcal{A}$ splits as a direct sum of the form
    \[(\mathcal{E}_A \overset{0}{\lra} \mathcal{L}_A) \oplus (T \overset{1}{\lra} T)\]
    where $T$ is a free module. Hence, it suffices to consider the case where the differential $d$ is the zero map. In this case, $\mathcal{K}_A = \mathcal{E}_A[1]$ and we can omit the last quasi-isomorphism in~(\ref{eq:psi}). 
    
    We compute the restriction of $\psi_{\mc{A}}$ to the term $D^r(\mathcal{A})^{-i} \otimes D^r(\mathcal{A})^{-j}$. Since $D^r(\mc{K}_{\mc{A}} \otimes \mc{L}_{\mc{A}}) = D^r(\mc{E}_{\mc{A}}[1] \otimes \mc{L}_{\mc{A}})$ is concentrated in cohomological degree $-r$, the map $\psi$ is zero unless $i + j = r$. Moreover, $\mc{A} \otimes \mc{A}$ is given by
    \[\left(\mc{E}_{\mc{A}}[1] \otimes \mc{E}_{\mc{A}}[1]\right) \oplus \left((\mc{E}_{\mc{A}}[1] \otimes \mc{L}_{\mc{A}}) \oplus (\mc{L}_{\mc{A}} \otimes \mc{E}_{\mc{A}}[1]) \right) \oplus \left( \mc{L}_{\mc{A}}\otimes \mc{L}_{\mc{A}}\right).\]
    By the $(\GL(\mc{A}) \times \GL(\mc{A}))$-equivariance of the first map in (\ref{eq:psi}), $D^r(\mathcal{A})^{-i} \otimes D^r(\mathcal{A})^{-r+i}$ maps isomorphically to $D^i(\mc{E}_{\mc{A}}[1] \otimes \mc{L}_{\mc{A}}) \otimes D^{r-i}(\mc{L}_{\mc{A}} \otimes \mc{E}_{\mc{A}}[1])$. To conclude the proof, it suffices to show that the corresponding restriction of $D^r(\delta_\mc{A})$ given by
    \begin{equation}\label{eq:resD(delta)}
        D^i(\mc{E}_{\mc{A}}[1] \otimes \mc{L}_{\mc{A}}) \otimes D^{r-i}(\mc{L}_{\mc{A}} \otimes \mc{E}_{\mc{A}}[1]) \lra D^r(\mc{E}_{\mc{A}}[1]\otimes \mc{L}_{\mc{A}})=\det(\mc{E}_{\mc{A}})[r] \otimes \mc{L}^{\otimes r}_{\mc{A}}
    \end{equation}
    is a perfect pairing. Concretely, the map $\delta_{\mc{A}}$ is the zero map on $(\mc{A}\otimes \mc{A})^0$ and $(\mc{A}\otimes \mc{A})^{-2}$, and $\delta_{\mc{A}}(x,y) = x-\tau(y)$ on $(\mc{A}\otimes \mc{A})^{-1}$, so (\ref{eq:resD(delta)}) is given by
    \[x_1\cdots x_i \otimes y_1 \cdots y_{r-i} \longmapsto (-1)^{r-i}x_1 \cdots x_i \tau(y_1) \cdots \tau(y_{r-i})\]
    which is a perfect pairing.
\end{proof}

\subsection{Application to binary forms} \label{subsec:presen-Oa}

We fix $a,b\geq 1$, let $\PP^b = \bb{P}(\Sym^b U)$, and set $d=ab$.

By \cite{BFL}*{Theorem~2} there exists an $\SL_2$-equivariant exact sequence
\begin{equation}\label{eq:pres-Oa}
 0 \lra \mc{F} \lra \Sym^{d+b-2}U \oo \mc{O}_{\PP^b}(-1) \overset{\varphi}{\lra} \Sym^{d}U \oo \mc{O}_{\PP^b} \lra \mc{O}_{\PP^b}(a) \lra 0,
\end{equation}
where $\varphi$ is as in \eqref{eq:def-phi}. We have that
\[ \rk(\mc{F}) = b-1 \qquad\mbox{and}\qquad \det(\mc{F}) = \mc{O}_{\PP^b}(-d-b+a+1).\]

If we dualize $\varphi$ and twist by $\mc{O}_{\PP^b}(-1)$ to get (after the identification $U\cong U^{\vee}$)
\[ \varphi^{\vee}(-1) \colon  \Sym^{d}U(-1) \lra \Sym^{d+b-2} U \oo \mc{O}_{\PP^b}.\]
Thinking of $\varphi$ and $\varphi^{\vee}(-1)$ as $2$-term complexes concentrated in cohomological degrees $-1$ and $0$, we construct associated complexes by taking symmetric or exterior powers as in \cite{weyman}*{Section~2.4}.

For $r\geq 0$, we consider the symmetric power $\mc{S}_r^{\bullet} = \Sym^r(\varphi)$. It is a complex whose terms are
\[ 
\mc{S}_r^{-i} = \bw^i(\Sym^{d+b-2}U) \oo \Sym^{r-i}(\Sym^d U) \oo \mc{O}_{\PP^b}(-i),\text{ for }0\leq i\leq r,
\]
where the cohomology sheaves are given by (recall that $\mc{F} =\ker \varphi$)
\[
\mc{H}^{-i}(\mc{S}^{\bullet}_r) = \bigwedge^i\mc{F} \otimes \mc{O}_{\PP^b}((r-i)a).
\]
Similarly, we consider the exterior power $\mc{W}^r_{\bullet}= \bw^r(\varphi^{\vee}(-1))$, whose terms are
\[ 
\mc{W}_r^{-i} = \Sym^i(\Sym^d U) \oo \bw^{r-i}(\Sym^{d+b-2}U) \oo \mc{O}_{\PP^b}(-i),\text{ for }0\leq i\leq r,
\]
where the cohomology sheaves are given by
\[
\mc{H}^{-i}(\mc{W}^{\bullet}_r) = \bigwedge^{r-i} \mc{F}^\vee \otimes\mc{O}_{\PP^b}(-ia-r).
\]
Notice that the complexes $\mc{S}_r^{\bullet}$ and $\mc{W}_r^{\bullet}$ are dual to each other (up to a twist by some line bundle, and a cohomological shift). The complexes $\mc{S}_1^{\bullet}$ and $\mc{W}_1^{\bullet}$ are given by the maps $\varphi$ and $\varphi^{\vee}(-1)$, and we have $\mc{S}_r^{\bullet} = \Sym^r(\mc{S}_1^{\bullet})$ and $\mc{W}_r^{\bullet} = \bw^r(\mc{W}_1^{\bullet})$. When $r = b - 1$, the complexes $\mc{S}_{b-1}^\bullet$ and $\mc{W}_{b-1}^\bullet$ have isomorphic cohomology sheaves. Recall from Section~\ref{sec:prelim} that
\[ \rk(\mc{F}) = b-1 \qquad\mbox{and}\qquad \det(\mc{F}) = \mc{O}_{\PP^b}(-d-b+a+1).\]
It follows that
\begin{align*}
    \mc{H}^{-i}(\mc{S}^{\bullet}_{b-1}) = \bigwedge^i\mc{F} \otimes \mc{O}_{\PP^b}((b-1-i)a) &\cong \bigwedge^{b-1-i}\mc{F}^\vee \otimes \det(\mc{F}) \otimes \mc{O}_{\PP^b}((b-1-i)a)\\  
    &= \bigwedge^{b-1-i}\mc{F}^\vee \otimes \mc{O}_{\PP^b}(-ia-b+1) = \mc{H}^{-i}(\mc{W}^\bullet_{b-1}).
\end{align*}

When $b=2$ it is not hard to check that $\varphi$ is a skew-symmetric map, and in particular, we get an isomorphism between~$\mc{S}_1^{\bullet}$ and~$\mc{W}_1^{\bullet}$. In the case when $a=1$, it follows from \cite{reed}*{Theorem~1.1} that $\mc{S}_{b-1}^{\bullet} \cong \mc{W}_{b-1}^{\bullet}$. We generalize this for all $b\geq 1$ and all $d=ab$ as follows.

\begin{theorem}\label{thm:self-duality}
 We have a $\GL(U)$-equivariant isomorphism of complexes $\mc{S}_{b-1}^{\bullet} \cong \mc{W}_{b-1}^{\bullet}$.
\end{theorem}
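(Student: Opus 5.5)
We derive Theorem~\ref{thm:self-duality} from the general duality Theorem~\ref{thm:general-duality}, applied on $X=\PP^b$ to $\mc{A}=\mc{S}_1^{\bullet}$, i.e.\ the two-term complex $\varphi$ in degrees $-1,0$. By \eqref{eq:pres-Oa} the hypotheses hold: $\varphi$ has constant rank, $\mc{E}_{\mc{A}}=\mc{F}$ has rank $r=b-1$, and $\mc{L}_{\mc{A}}=\mc{O}_{\PP^b}(a)$. Theorem~\ref{thm:general-duality} then gives a quasi-isomorphism
\[
D^{b-1}(\mc{A}) \lra (D^{b-1}\mc{A})^{\vee}\otimes\det(\mc{F})[b-1]\otimes\mc{O}_{\PP^b}((b-1)a).
\]
In characteristic zero $D^{b-1}\mc{A}\cong\Sym^{b-1}\mc{A}=\mc{S}_{b-1}^{\bullet}$. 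The dual $(\Sym^{b-1}\mc{A})^{\vee}$ is the symmetric power of $\varphi^{\vee}$, with $\varphi^\vee$ placed in degrees $0,1$. After the shift $[b-1]$, this is the exterior power of $\varphi^{\vee}$ regarded in degrees $-1,0$, which reproduces the terms of $\mc{W}_{b-1}^{\bullet}$. The twist $\det(\mc{F})\otimes\mc{O}((b-1)a)=\mc{O}(-d-b+a+1+(b-1)a)=\mc{O}(-b+1)$ supplies the $(-1)$-twists on the factors. Using $U\cong U^\vee$ and the resulting identification of $\bw^2$ of $\Sym^kU$ with a trivial $\SL_2$-character, which becomes a determinant twist for $\GL(U)$, this matches $\mc{W}_{b-1}^{\bullet}=\bw^{b-1}(\varphi^{\vee}(-1))$ up to a $\GL(U)$ character.

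This yields only an isomorphism in the derived category, whereas the theorem asserts an isomorphism of complexes. The second step is to show that the derived map is represented by an honest chain map, and that this chain map is a termwise isomorphism. Both complexes consist of sums of line bundles $\mc{O}(-i)$ with $0\le i\le b-1<b+1$. So $\op{Hom}$'s between them in the derived category are computed by chain maps modulo homotopy, because $\op{Ext}^{>0}(\mc{O}(-i),\mc{O}(-j))=0$ in this range. More concretely, the pairing $\psi$ can be written at the chain level by replacing $\mc{K}_{\mc{A}}\otimes\mc{L}$ with the complex $\mc{A}^{-1}\to\mc{K}^0$. Its component $\mc{S}^{-i}\otimes\mc{S}^{-(b-1-i)}\to\det(\mc{F})\otimes\mc{L}^{b-1}$ is then obtained by composing with a fixed map of bundles, and the adjoint gives a chain map $\mc{S}_{b-1}^{\bullet}\to\mc{W}_{b-1}^{\bullet}$. $\GL(U)$-equivariance follows from the functoriality of all constructions and the equivariance of \eqref{eq:pres-Oa}.

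The main obstacle is showing that this chain map is a termwise isomorphism, rather than merely a quasi-isomorphism. Both complexes are minimal, in the sense that their differentials are linear and so vanish on fibers modulo the maximal ideal. I would prove termwise isomorphism by passing to the graded modules $H^0_*$. A quasi-isomorphism between bounded complexes of sums of line bundles of this shape induces, by a spectral sequence argument as in Lemma~\ref{lem:induced-H0}, a quasi-isomorphism on suitable truncations of the complexes of free $S$-modules. A quasi-isomorphism between minimal complexes of free modules that is a comparison of minimal resolutions-type objects is an isomorphism. Alternatively, and more cleanly, I would check termwise invertibility at a single point using $\GL(U)$-equivariance: termwise, the map is a morphism $\bw^i(\Sym^{d+b-2}U)\otimes\Sym^{b-1-i}(\Sym^dU)\to$ the corresponding term of $\mc{W}_{b-1}^{\bullet}$, which is a constant linear map between isomorphic representations. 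Its invertibility can be detected on highest weight vectors or by a degeneration argument. If these arguments stall, the fallback is to compare via cones: the cone of the chain map is an exact minimal complex of line bundles of length $<b+1$, hence zero.
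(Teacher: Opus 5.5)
Your strategy is the paper's. You use Theorem~\ref{thm:general-duality} to get $\mc{S}_{b-1}^{\bullet}\simeq\mc{W}_{b-1}^{\bullet}$ in the derived category, and then upgrade it using the fact that for bounded complexes of sums of $\mc{O}(-i)$, $0\le i\le b$, derived Homs are chain maps modulo homotopy. The paper gets that fact from Beilinson's equivalence $K_S\simeq D^{\mathbf{b}}_{\mathrm{Coh}}(\PP^b)$, and your $\op{Ext}$-vanishing remark is exactly its fully faithful part. Your twist computation $\det(\mc{F})\otimes\mc{O}((b-1)a)=\mc{O}(-b+1)$ and the d\'ecalage identification of the dual with $\mc{W}_{b-1}^{\bullet}$ are fine.

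The gap is the final step: you misplace the difficulty and do not actually close it. Once $\op{Hom}_D=\op{Hom}_K$, the derived inverse is also a chain map $g$, with $gf$ and $fg$ homotopic to the identities. Every homotopy here maps a sum of copies of $\mc{O}(-i)$ to a sum of copies of $\mc{O}(-i-1)$, so it is zero. Hence $gf=1$ and $fg=1$ on the nose, and $f$ is an isomorphism of complexes. This is the content of the paper's ``homotopy equivalent minimal complexes are isomorphic.''

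Of the three closings you offer, two do not work:
\begin{itemize}
\item The cone fallback is false. The differential of $\op{cone}(f)$ contains $f$ itself, a termwise constant map $\mc{O}(-i)^{\oplus}\to\mc{O}(-i)^{\oplus}$, so the cone is not minimal. If $f$ is the isomorphism you want, the cone is contractible but nonzero.
\item The highest-weight idea proves nothing: equivariance allows $f$ to be singular on the multiplicity space of some isotypic component.
\end{itemize}
Your $H^0_*$ route does work, but with no truncation needed. The cone has only $b+1$ terms, all sums of $\mc{O}(-j)$ with $0\le j\le b-1$. So in the spectral sequence of Lemma~\ref{lem:induced-H0} no $d_{b+1}$ can be nonzero, and $H^0_*(\op{cone} f)$ is exact in every degree. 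A quasi-isomorphism of bounded free complexes is a homotopy equivalence, and minimality finishes. This is precisely the paper's argument.

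Separately, your ``chain-level'' description of $\psi$ cannot work as stated. Inverting $\mc{F}[1]\to\mc{K}_{\mc{A}}$ at chain level would need a retraction $\bw^{b-1}\mc{A}^{-1}\to\det\mc{F}$, that is, a nonzero map $\mc{O}(-b+1)\to\mc{O}(-d-b+a+1)$. No such map exists for $b\ge 2$. You do not need it, since the chain map already comes from $\op{Hom}_D=\op{Hom}_K$.
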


\begin{proof}
It follows from Theorem~\ref{thm:general-duality} that $\mc{S}_{b-1}^{\bullet}\simeq \mc{W}_{b-1}^{\bullet}$ in the derived category $D(\PP^b)$. 

We claim that the complexes $H^0_\ast(\mc{S}_{b-1}^{\bullet})$ and $H^0_\ast(\mc{W}_{b-1}^{\bullet})$ are homotopy equivalent. Write $S=\Sym(\Sym^b U)$ for the homogeneous coordinate ring of $\PP^b$, $M_S$ for the full subcategory of the category of graded $S$-modules whose objects are finite direct sums of $S(-i)$ where $0 \le i \le b$, $K_S$ for the homotopy category of bounded complexes over $M_S$, $D^{\mathbf{b}}_{\mathrm{Coh}}(\PP^b)$ for the bounded derived category of coherent sheaves on $\PP^b$. By \cite{beilinson}, there is an equivalence of categories $F \colon K_S \lra D^{\mathbf{b}}_{\mathrm{Coh}}(\PP^b)$ such that $F$ sends a complex of $S$-modules to its sheafification. The claim now follows from 
\[
F(H^0_\ast(\mc{S}_{b-1}^{\bullet})) = \mc{S}_{b-1}^{\bullet} \simeq \mc{W}_{b-1}^{\bullet} = F(H^0_\ast(\mc{W}_{b-1}^{\bullet})).
\]

Finally, since $H^0_\ast(\mc{S}_{b-1}^{\bullet})$ and $H^0_\ast(\mc{W}_{b-1}^{\bullet})$ are minimal complexes, they must be isomorphic, and therefore $\mc{S}_{b-1}^{\bullet}\cong \mc{W}_{b-1}^{\bullet}$ are in fact isomorphic as complexes. 
\end{proof}

We establish the following important relation between different complexes $\mc{S}_r^{\bullet}$.

\begin{proposition}\label{prop:iso-Sr-Sr-1}
 If $d=ab$ as above, then we have quasi-isomorphisms
\begin{equation*}
 \mc{S}_r^{\bullet} \simeq \mc{S}_{r-1}^{\bullet}(a)\text{ for all }r\geq b.
\end{equation*}
\end{proposition}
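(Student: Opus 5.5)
The plan is to write down an explicit, globally defined map of complexes $\mc{S}_r^{\bullet}\to\mc{S}_{r-1}^{\bullet}(a)$ and then check that it is a quasi-isomorphism locally, using the same splitting trick as in the proof of Theorem~\ref{thm:general-duality}. Write $\mc{A}=\mc{S}_1^{\bullet}$ for the two-term complex $\varphi$. By \eqref{eq:pres-Oa} its cokernel is $\mc{L}=\mc{O}_{\PP^b}(a)$, and we have the natural surjection $\theta\colon\mc{A}\to\mc{L}$, regarding $\mc{L}$ as a complex in degree $0$. Since $\theta$ is a chain map, I would define
\[
\mu_r\colon \Sym^r(\mc{A}) \lra \Sym^{r-1}(\mc{A})\oo\mc{A} \xrightarrow{1\oo\theta} \Sym^{r-1}(\mc{A})\oo\mc{L} = \mc{S}_{r-1}^{\bullet}(a).
\]
The first arrow is the (super)comultiplication of the graded-symmetric algebra, which is a chain map because the differential on $\Sym^{\bullet}(\mc{A})$ is a derivation. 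Hence $\mu_r$ is a chain map. It is also functorial in $\mc{A}$, with respect to morphisms of two-term complexes that are compatible with the maps to the cokernel.

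Before the local check, I would compare cohomology sheaves. By the formulas above, $\mc{H}^{-i}(\mc{S}_r^{\bullet})=\bw^i\mc{F}\oo\mc{O}((r-i)a)$ and $\mc{H}^{-i}(\mc{S}_{r-1}^{\bullet}(a))=\bw^i\mc{F}\oo\mc{O}((r-1-i)a+a)$, and these agree for $0\le i\le r-1$. In degree $-r$ the target is zero, while the source is $\bw^r\mc{F}$, which vanishes because $\rk\mc{F}=b-1<r$. This is exactly where the hypothesis $r\geq b$ enters.

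To prove that $\mu_r$ is a quasi-isomorphism, it suffices to check on stalks, i.e.\ after pulling back to the spectrum of a local ring. Since $\varphi$ has constant rank, over a local ring $\mc{A}$ splits as $(\mc{F}\xrightarrow{0}\mc{L})\oplus(T\xrightarrow{1}T)$ with $T$ free. This splitting can be chosen compatibly with $\theta$, which is zero on the contractible summand. In characteristic $0$, $\Sym^r$ of a direct sum with a contractible complex is homotopy equivalent to $\Sym^r$ of the other summand: the terms $\Sym^j(T\to T)$ are exact for $j\ge1$, and the Künneth decomposition handles the rest. By functoriality of $\mu_r$, we therefore reduce to the case $\mc{A}=\mc{F}[1]\oplus\mc{L}$ with zero differential.

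In that case $\Sym^r(\mc{A})=\bigoplus_i \bw^i\mc{F}\oo\mc{L}^{\oo(r-i)}$ (up to shifts), and $\mu_r$ acts on the $i$-th summand by comultiplying off one factor of $\mc{L}$, which is multiplication by the nonzero scalar $r-i$ when $i\le r-1$. The only summand not hit is $i=r$, which is $\bw^r\mc{F}=0$ since $r\ge b$. So $\mu_r$ is an isomorphism in this split model, and hence a quasi-isomorphism globally.

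The main obstacle I expect is bookkeeping rather than substance. One has to verify carefully that the super-comultiplication, with its signs, is a chain map compatible with $\theta$. One also has to make sure that the local splitting of $\mc{A}$ respects $\theta$, so that functoriality legitimately reduces the claim to the split model.
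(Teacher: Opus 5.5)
Your proof is correct. It takes a different route from the paper, although the chain map is the same one.

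Use the paper's notation: $\mc{C}^{-1}=\Sym^{d+b-2}U(-1)$, $\mc{C}^0=\Sym^dU\oo\mc{O}_{\PP^b}$, and $\mc{M}=\ker(\mc{C}^0\onto\mc{O}_{\PP^b}(a))$. Since $\theta$ kills every component of the comultiplication that lands in $\mc{A}^{-1}$, your $\mu_r$ acts on $\bw^i\mc{C}^{-1}\oo\Sym^{r-i}\mc{C}^0$ as $1\oo\bigl(\Sym^{r-i}\mc{C}^0\to\Sym^{r-i-1}\mc{C}^0\oo\mc{O}_{\PP^b}(a)\bigr)$. This is exactly the surjection the paper obtains from $0\to\mc{M}\to\mc{C}^0\to\mc{O}_{\PP^b}(a)\to0$.

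The two arguments differ in how they show this map is a quasi-isomorphism.

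\textbf{The paper's route.} It argues globally. The map is surjective term by term, and its kernel is the Koszul complex
\[
\bw^r\mc{C}^{-1}\to\bw^{r-1}\mc{C}^{-1}\oo\mc{M}\to\cdots\to\Sym^r\mc{M}
\]
attached to $0\to\mc{F}\to\mc{C}^{-1}\to\mc{M}\to0$. This complex resolves $\bw^r\mc{F}$, which is $0$ once $r\geq b$, so the kernel is exact.

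\textbf{Your route.} You reuse the template of Theorem~\ref{thm:general-duality}: pass to stalks, split off $T\xrightarrow{1}T$ compatibly with $\theta$, and compute $\mu_r$ on the split model as multiplication by $r-i$.

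\textbf{Trade-offs.} Your argument is formal and uniform with the duality theorem. It does, however, divide by $r-i$. It also needs two auxiliary facts: $\Sym^j(T\xrightarrow{1}T)$ is exact for $j\geq1$, which is itself a Koszul exactness statement, and this exactness survives tensoring with the remaining factor. The paper's argument needs none of these reductions and gives more information. It produces a global short exact sequence $0\to\mc{K}_r^{\bullet}\to\mc{S}_r^{\bullet}\to\mc{S}_{r-1}^{\bullet}(a)\to0$ valid for every $r$. For $r<b$ this identifies the cone of $\mc{S}_r^{\bullet}\to\mc{S}_{r-1}^{\bullet}(a)$ with a shift of $\bw^r\mc{F}$. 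It is also the model for the parallel argument in Proposition~\ref{prop:iso-Wr-Wr-1}.
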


\addtocounter{equation}{-1}
\begin{subequations}
\begin{proof}
 To simplify the notation, we write $\mc{C}^{-1} = \Sym^{d+b-2}U(-1)$, $\mc{C}^0 = \Sym^d U \oo \mc{O}_{\PP^b}$, and with $\mc{F}$ as in Section~\ref{subsec:presen-Oa}, we let
 \[ \mc{M} = \ker(\mc{C}^0 \onto \mc{O}_{\PP^b}(a)) = \coker(\mc{F}\hookrightarrow\mc{C}^{-1}).\]
 The short exact sequence
 \[ 0 \lra \mc{M} \lra \mc{C}^0 \lra \mc{O}_{\PP^b}(a) \lra 0\]
 induces for each $i\geq 0$ a short exact sequence
 \[ 0 \lra \Sym^i\mc{M} \lra \Sym^i\mc{C}^0 \lra \left(\Sym^{i-1}\mc{C}^0\right)(a) \lra 0.\]
 Tensoring the above sequence with $\Sym^{r-i}\mc{C}^{-1}$, we can assemble the resulting sequences into a short exact sequence of complexes
 \begin{equation}\label{eq:ses-Kr-Sr-Sr-1}
  0 \lra \mc{K}_r^{\bullet} \lra \mc{S}_r^{\bullet} \lra \mc{S}_{r-1}^{\bullet}(a) \lra 0,
 \end{equation}
 where $\mc{K}_r^{\bullet}$ is the (right) Koszul resolution of $\bw^r\mc{F}$ 
  \[ \left[0 \lra \bw^r\mc{F} \lra \right] \bw^r\mc{C}^{-1} \lra \bw^{r-1}\mc{C}^{-1}\oo\mc{M} \lra \cdots \lra \Sym^r\mc{M} \lra 0,\]
 induced by the short exact sequence
 \[ 0 \lra \mc{F} \lra \mc{C}^{-1} \lra \mc{M} \lra 0.\]
 Since $\rk(\mc{F})=b-1$, it follows that for $r\geq b$ we have $\bw^r\mc{F}=0$, so $\mc{K}_r^{\bullet}$ is exact and therefore the short exact sequence (\ref{eq:ses-Kr-Sr-Sr-1}) induces the desired quasi-isomorphism.
\end{proof}
\end{subequations}

For the next result we need to consider the following natural extensions of complexes $\mc{A}^{\bullet}$ concentrated in non-positive degrees ($\mc{A}^i=0$ for $i>0$). For such complexes we have
\[ \mc{H}^0(\mc{A}^{\bullet}) = \coker(\mc{A}^{-1}\lra\mc{A}^0).\]
We define the extended complex $\widehat{\mc{A}}^{\bullet}$ by letting 
\[\widehat{\mc{A}}^1 = \mc{H}^0(\mc{A}^{\bullet}),\text{ and }\widehat{\mc{A}}^i = \mc{A}^i\text{ for }i\neq 1,\]
with differential $\pd^0\colon \widehat{\mc{A}}^0 \lra \widehat{\mc{A}}^1$ given by the natural surjection $\mc{A}^0\onto\mc{H}^0(\mc{A}^{\bullet})$. Notice that by construction, $\mc{H}^i(\widehat{\mc{A}}^{\bullet}) = 0$ for $i\geq 0$, so the cohomology of $\widehat{\mc{A}}^{\bullet}$ is concentrated in strictly negative degrees. Moreover, if $\mc{A}^{\bullet}\lra\mc{B}^{\bullet}$ is a quasi-isomorphism, then the induced map $\widehat{\mc{A}}^{\bullet}\lra\widehat{\mc{B}}^{\bullet}$ is also a quasi-isomorphism.

We apply this construction to the complexes $\mc{W}_r^{\bullet}$. Since $\mc{H}^0(\mc{W}_1^{\bullet}) = \coker(\varphi^{\vee}(-1))=\mc{F}^{\vee}(-1)$, we get that
 \[\mc{H}^0(\mc{W}_r^{\bullet}) = \coker(\mc{W}_r^{-1}\lra\mc{W}_r^0) = \left(\bw^r\mc{F}^{\vee}\right)(-r)\text{ for all }r\geq 1.\]
 Since $\rk(\mc{F}^{\vee})=b-1$, this shows that $\widehat{\mc{W}}_r^{\bullet}=\mc{W}_r^{\bullet}$ for $r\geq b$. Moreover, when $r=b-1$ we have
 \[\widehat{\mc{W}}_{b-1}^1=\mc{H}^0(\mc{W}_{b-1}^{\bullet})=\mc{O}_{\PP^b}(a(b-1)).\]
 
\begin{proposition}\label{prop:iso-Wr-Wr-1}
 If $d=ab$ as above, then we have a quasi-isomorphism
\begin{equation*}\label{eq:Wr-Wr-1-a}
 \mc{W}_{b-2}^{\bullet}(-a-1)[1]\simeq  \widehat{\mc{W}}_{b-1}^{\bullet}.
\end{equation*}
\end{proposition}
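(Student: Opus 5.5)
Dualizing Proposition~\ref{prop:iso-Sr-Sr-1} would be the most natural route, but it only covers $r\geq b$, while we need the step from $b-1$ down to $b-2$. So I will instead mirror its proof directly for the exterior powers $\mc{W}_r^\bullet=\bw^r(\varphi^\vee(-1))$. The exact sequence \eqref{eq:pres-Oa}, dualized and twisted by $\mc{O}(-1)$, gives
\[0\lra \mc{O}_{\PP^b}(-a-1)\lra \Sym^d U(-1)\overset{\varphi^\vee(-1)}{\lra} \Sym^{d+b-2}U\oo\mc{O}_{\PP^b}\lra \mc{F}^\vee(-1)\lra 0.\]
Set $\mc{D}^{-1}=\Sym^dU(-1)$ and $\mc{D}^0=\Sym^{d+b-2}U\oo\mc{O}$, and let $\mc{N}=\mc{D}^{-1}/\mc{O}(-a-1)=\op{im}\varphi^\vee(-1)$. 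We then have a short exact sequence $0\to\mc{O}(-a-1)\to\mc{D}^{-1}\to\mc{N}\to 0$. Taking symmetric powers gives, for each $i$,
\[0\to \Sym^{i-1}\mc{D}^{-1}(-a-1)\to\Sym^i\mc{D}^{-1}\to\Sym^i\mc{N}\to0.\]
Tensoring with $\bw^{r-i}\mc{D}^0$ and checking compatibility with the Koszul-type differentials, I get a short exact sequence of complexes
\[0\lra \mc{W}_{r-1}^\bullet(-a-1)[1]\lra \mc{W}_r^\bullet\lra \mc{Q}_r^\bullet\lra 0.\]
Here $\mc{Q}_r^{-i}=\Sym^i\mc{N}\oo\bw^{r-i}\mc{D}^0$, and the shift $[1]$ accounts for the terms $\Sym^{i-1}$ sitting in degree $-i$. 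The one thing to verify is that the image of $\mc{O}(-a-1)$ is killed by $\varphi^\vee(-1)$, so that the differential on $\mc{W}_r^\bullet$ restricts to the subcomplex as the differential of $\mc{W}_{r-1}^\bullet$ up to sign. This holds because the multiplication by the section is a derivation-compatible operation on $\Sym\oo\bw$.

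The complex $\mc{Q}_r^\bullet$ is the Koszul complex $\Sym^i\mc{N}\oo\bw^{r-i}\mc{D}^0$ attached to the injection $\mc{N}\hookrightarrow\mc{D}^0$ with cokernel $\mc{F}^\vee(-1)$. Since $\mc{N}$ and $\mc{F}^\vee(-1)$ are vector bundles, this complex is a resolution, so $\mc{Q}_r^\bullet$ has its only cohomology in degree $0$, equal to $\bw^r(\mc{F}^\vee(-1))=(\bw^r\mc{F}^\vee)(-r)$.

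Now take $r=b-1$. The bundle $\bw^{b-1}\mc{F}^\vee(-(b-1))$ is the line bundle $\mc{O}(a(b-1))$ computed above, and it equals $\mc{H}^0(\mc{W}_{b-1}^\bullet)$. Hence $\mc{Q}_{b-1}^\bullet\to\mc{O}(a(b-1))$ is a quasi-isomorphism. Comparing the sequence with the augmentation, I obtain a short exact sequence of complexes
\[0\to\mc{W}_{b-2}^\bullet(-a-1)[1]\to\widehat{\mc{W}}_{b-1}^\bullet\to\widehat{\mc{Q}}_{b-1}^\bullet\to0,\]
in which $\widehat{\mc{Q}}_{b-1}^\bullet$ is exact. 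The inclusion is therefore a quasi-isomorphism, which is the claim.

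The main obstacle is the bookkeeping in the first step: I must check that the filtration by powers of the sub line bundle is compatible with the differentials of $\bw^r$ of a two-term complex, including signs. I must also check that the quotient really is the Koszul complex of $\mc{N}\subset\mc{D}^0$ with the stated exactness. The latter uses only that $\mc{N}$ is locally a subbundle, which follows since $\mc{F}^\vee(-1)$ is locally free.
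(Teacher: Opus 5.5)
Your proof is correct and is essentially the paper's argument: you filter $\mc{W}_{b-1}^{\bullet}$ by the kernel line bundle $\mc{O}_{\PP^b}(-a-1)$ of $\varphi^{\vee}(-1)$. The quotient is the Koszul resolution of $\bw^{b-1}(\mc{F}^{\vee}(-1))=\mc{O}_{\PP^b}(a(b-1))$, and passing to the extended complexes gives the quasi-isomorphism. The only difference is that you set up the short exact sequence for general $r$ before specializing to $r=b-1$, which is harmless.
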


\begin{proof}
 In analogy with the proof of Proposition~\ref{prop:iso-Sr-Sr-1}, we let 
 \[\mc{D}^{-1} = \Sym^d U(-1),\quad \mc{D}^0 = \Sym^{d+b-2}U \oo \mc{O}_{\PP^b},\] 
 and note that $\varphi^{\vee}\colon \mc{D}^{-1}\lra\mc{D}^0$ has kernel $\mc{O}_{\PP^b}(-a-1)$ and cokernel $\mc{F}^{\vee}(-1)$. We let
  \[ \mc{N} = \ker(\mc{D}^0 \onto \mc{F}^{\vee}(-1)) = \coker(\mc{O}_{\PP^b}(-a-1)\hookrightarrow\mc{D}^{-1}).\]
The short exact sequence
\[ 0 \lra \mc{O}_{\PP^b}(-a-1) \lra \mc{D}^{-1} \lra \mc{N} \lra 0\]
induces for each $i\geq 0$ a short exact sequence
\[ 0 \lra \left(\Sym^{i-1}\mc{D}^{-1}\right) (-a-1) \lra \Sym^i\mc{D}^{-1} \lra \Sym^i \mc{N} \lra 0.\]
Tensoring the above sequence with $\bw^{b-1-i}\mc{D}^0$ and varying $i$, we get a short exact sequence of complexes
\[ 0 \lra \mc{W}_{b-2}^{\bullet}(-a-1)[1] \lra \mc{W}_{b-1}^{\bullet} \lra \mc{G}^{\bullet} \lra 0,\]
where $\mc{G}^{\bullet}$ is the (left) Koszul resolution of $\bw^{b-1}(\mc{F}^{\vee}(-1))=\mc{O}_{\PP^b}(a(b-1))$ induced by the short exact sequence
\[ 0 \lra \mc{N} \lra \mc{D}^0 \lra \mc{F}^{\vee}(-1) \lra 0.\]
The map $\mc{W}_{b-1}^{\bullet} \lra \mc{G}^{\bullet}$ induces an isomorphism $\mc{H}^0(\mc{W}_{b-1}^{\bullet}) \cong \mc{H}^0(\mc{G}^{\bullet})$, hence we get a short exact sequence of complexes
\[ 0 \lra \mc{W}_{b-2}^{\bullet}(-a-1)[1] \lra \widehat{\mc{W}}_{b-1}^{\bullet} \lra \widehat{\mc{G}}^{\bullet} \lra 0.\]
Since $\widehat{\mc{G}}^{\bullet}$ is exact, this induces the desired quasi-isomorphism, which concludes the proof.
\end{proof}

\subsection{Character computation} \label{sec:char-comp}

By comparing term by term, the isomorphism of complexes in Theorem~\ref{thm:self-duality} implies that for all $i$, we have an isomorphism of $\GL(U)$-representations
\[
\bigwedge^i(\Sym^{d+b-2} U) \otimes \Sym^{b-1-i}(\Sym^d U) \cong \Sym^i(\Sym^d U) \otimes \bigwedge^{b-1-i}(\Sym^{d+b-2} U),
\]
and similar isomorphisms were obtained in relation to the classical Hermite reciprocity in \cite{rai-sam}*{Corollary~5.4}. We can directly show that these are isomorphic by comparing their characters. The character of $\Sym^n(U)$ is $\sum_{i=0}^n x_1^i x_2^{n-i}$, and we can specialize $x_1 \mapsto q$ and $x_2 \mapsto 1$ without losing information (as long we remember the homogeneous degree $n$) to get 
\[
1+q+\cdots + q^{n} =: [n+1]_q.
\]
Next, define the $q$-factorial by 
\[
[n]_q! := [n]_q [n-1]_q \cdots [1]_q,
\]
and the $q$-binomial coefficient by
\[
\begin{bmatrix} n \\ m \end{bmatrix}_q := \frac{[n]_q!}{[m]_q! [n-m]_q!}.
\]
Analogously, we can define multinomial coefficients. A standard computation gives
\[
\mathrm{char} (\bigwedge^m(\Sym^n U)) = \begin{bmatrix} n+1 \\ m \end{bmatrix}_q, \qquad
\mathrm{char} (\Sym^m(\Sym^n U)) = \begin{bmatrix} m+n \\ m\end{bmatrix}_q.
\]
In particular, we have
\begin{align*}
\mathrm{char} (\bigwedge^i(\Sym^{d+b-2} U) \otimes \Sym^{b-1-i}(\Sym^d U)) &= \frac{[d+b-1]_q!}{[i]_q! [d+b-1-i]_q!} \cdot \frac{ [d+b-1-i]_q!} {[b-1-i]_q! [d]_q!}\\
&= \begin{bmatrix} d+b-1 \\ i, b-1-i, d \end{bmatrix}_q\\
&=\frac{[d+i]_q!}{[i]_q! [d]_q!} \cdot \frac{ [d+b-1]_q!} {[b-1-i]_q! [d+i]_q!}\\
&= \mathrm{char} (\Sym^i(\Sym^d U) \otimes \bigwedge^{b-1-i}(\Sym^{d+b-2} U)).
\end{align*}
This suggests that the isomorphism of complexes in Theorem~\ref{thm:self-duality} can be extended to the case that $b$ does not divide $d$. Furthermore, it also suggests the existence of a larger family of isomorphisms (of complexes) whose characters are products of three or more $q$-binomial coefficients.

\section{Equations and syzygies for $X_{(a^b)}$}\label{sec:Xab}

The goal of this section is to determine the equations and syzygies of the variety $X=X_{(a^b)}$. A key ingredient for this is the maximal rank property of the Foulkes--Howe maps \eqref{eq:def-alfak}. We let $d=ab$ as usual, and observe that we can realize $X$ as a linear projection of a Veronese variety
\[
\xymatrix{
& \PP^N \ar@{-->}[dr] & \\
\PP^b \ar@{^{(}->}[ur]^{|\mc{O}_{\PP^b}(a)|} \ar[rr]_{\nu_a} & & \PP^d\\
}
\]
where $\PP^b=\bb{P}(\Sym^b U)$, $\PP^d=\bb{P}(\Sym^d U)$, $N={a+b\choose a}-1$, $\PP^N = \bb{P}(\Sym^a(\Sym^b U))$, and the linear projection is induced by the inclusion
\[ \Sym^d U = \Sym^{ab}U \hookrightarrow \Sym^a(\Sym^b U).\]
We summarize some notation, and recall some basic facts from elimination theory:
\begin{itemize}
 \item $S = \Sym(\Sym^d U)$ is the homogeneous coordinate ring of $\PP^d$.
 \item $R = \Sym(\Sym^b U)$ is the homogeneous coordinate ring of $\PP^b$.
 \item $A = \Sym(\Sym^a(\Sym^b U))$ is the homogeneous coordinate ring of $\PP^N$.
 \item $\a\colon S\lra R$ is the ring homomorphism corresponding to $\nu_a$. 
 \item $R^{(a)} $ is the homogeneous coordinate ring of the degree $a$ Veronese embedding of $\PP^b$.
 \item $B = \op{Im}(\a)$ is the coordinate ring of $X$, and $I(X) = \ker(\alpha)$ is the ideal of $X$.
\end{itemize}

\subsection{The Foulkes--Howe map has maximal rank} \label{subsec:FH-map}

We note that the maps \eqref{eq:def-alfak} are obtained from the algebra homomorphism $\a\colon S\lra R$ by restricting to degree $k$. We now use the results of Section~\ref{sec:self-duality} to prove that $\a_k$ has maximal rank.

\begin{proof}[Proof of Theorem~\ref{thm:Foulkes-Howe-map}]
  Recall the construction and properties of the complexes $\mc{S}_r^{\bullet}$ from Section~\ref{sec:self-duality}. Since $\mc{H}^0(\mc{S}_1^{\bullet}) = \coker(\varphi)=\mc{O}_{\PP^b}(a)$, it follows that
 \[\mc{H}^0(\mc{S}_r^{\bullet}) = \coker(\mc{S}_r^{-1}\lra\mc{S}_r^0) = \mc{O}_{\PP^b}(ra)\text{ for all }r\geq 1.\]
 If we consider the extended complexes $\widehat{\mc{S}}_r^{\bullet}$ as in Section~\ref{sec:self-duality}, then we have $\widehat{\mc{S}}_r^1=\mc{O}_{\PP^b}(ra)$ and
 \[ \a_r = H^0(\PP^b,\pd^0_r)\]
 is the map induced on global sections by $\pd^0_r\colon \widehat{\mc{S}}_r^0\lra \widehat{\mc{S}}_r^1$.
 
 We first consider the borderline case and prove that the map $\a_b$ is an isomorphism. Since $\a_b$ is the degree $b$ component of an algebra homomorphism whose source is a polynomial ring, this then shows that $\a_k$ is injective for all $k\leq b$. 
 
 Since the sheaves $\widehat{\mc{S}}_b^i$ have no cohomology for $i\neq 0,1$, and $\widehat{\mc{S}}_b^0$, $\widehat{\mc{S}}_b^1$ have no higher cohomology, the hypercohomology spectral sequence applied to $\widehat{\mc{S}}_b^{\bullet}$ shows that
 \[ \bb{H}^{0}(\widehat{\mc{S}}_b^{\bullet}) = \ker(\a_b),\ \bb{H}^{1}(\widehat{\mc{S}}_b^{\bullet}) = \coker(\a_b),\text{ and }\bb{H}^i(\widehat{\mc{S}}_b^{\bullet})=0\text{ for }i\neq 0,1.\]
To prove that $\a_b$ is an isomorphism is thus equivalent to showing that the hypercohomology of $\widehat{\mc{S}}_b^{\bullet}$ is identically zero. Note that Proposition~\ref{prop:iso-Sr-Sr-1} induces quasi-isomorphisms between the extended complexes
 \[ \widehat{\mc{S}}_r^{\bullet} \simeq \widehat{\mc{S}}_{r-1}^{\bullet}(a)\text{ for all }r\geq b.\]
 Combining this with Theorem~\ref{thm:self-duality} and Proposition~\ref{prop:iso-Wr-Wr-1}, we get
 \[ \widehat{\mc{S}}_b^{\bullet} \simeq \widehat{\mc{S}}_{b-1}^{\bullet}(a) \cong \widehat{\mc{W}}_{b-1}^{\bullet}(a) \simeq \mc{W}_{b-2}^{\bullet}(-1)[1].\]
 Since the sheaves forming the complex $\mc{W}_{b-2}^{\bullet}(-1)$ have vanishing cohomology groups, the hypercohomology of $\mc{W}_{b-2}^{\bullet}(-1)$ is identically zero, so the same is true for $\widehat{\mc{S}}_b^{\bullet}$, as desired.
 
 To conclude, we have to check that $\a_k$ is surjective for $k>b$, or equivalently that
 \[ \bb{H}^i(\widehat{\mc{S}}_k^{\bullet}) = 0\text{ for }i\geq 1.\]
 We argue as before to obtain
 \[\widehat{\mc{S}}_k^{\bullet} \simeq \widehat{\mc{S}}_{k-1}^{\bullet}(a) \simeq \cdots \simeq \widehat{\mc{S}}_{b-1}^{\bullet}((k-b+1)a) \cong \widehat{\mc{W}}_{b-1}^{\bullet}((k-b+1)a) \simeq \mc{W}_{b-2}^{\bullet}((k-b)a-1)[1].\]
 Since $\mc{W}_{b-2}^{\bullet}((k-b)a-1)[1]$ is a complex concentrated in negative cohomological degrees, whose terms are sheaves with vanishing higher cohomology, it follows that in fact
 \begin{equation}\label{eq:Hi-hat-Sk}
 \bb{H}^i(\widehat{\mc{S}}_k^{\bullet}) = \bb{H}^i(\mc{W}_{b-2}^{\bullet}((k-b)a-1)[1]) = 0\text{ for }i\geq 0,
 \end{equation}
 concluding the proof.
\end{proof}

\subsection{The ideal of $X$ has a linear resolution}
\label{subsec:lin-res-IX}

We have that $B\subseteq R^{(a)}$ is a graded subring, where both $B,R^{(a)}$ are endowed with the standard grading coming from their realization as quotients of the polynomial rings $S$ and $A$ respectively. More precisely, we have that $B_k = \op{Im}(\a_k)$, and $I(X)_k=\ker(\a_k)$. This leads to the first part of Theorem~\ref{thm:Xab}, as follows.

\begin{theorem}\label{thm:I-Xab}
 The ideal $I(X)$ is generated in degree $(b+1)$ and has a linear resolution.
\end{theorem}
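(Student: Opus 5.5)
The plan is to translate the statement into a regularity bound for the coordinate ring $B=S/I(X)$ and to obtain that bound by comparing $B$ with the Veronese ring $R^{(a)}$, using Theorem~\ref{thm:Foulkes-Howe-map}. Since $\a_k$ is injective for $k\leq b$, we have $I(X)_k=\ker(\a_k)=0$ for $k\leq b$, so every minimal generator of $I(X)$ has degree at least $b+1$. For a homogeneous ideal with no elements of degree $\leq b$, having a linear resolution generated in degree $b+1$ is equivalent to $\reg(I(X))\leq b+1$. This is in turn equivalent to $\reg(B)\leq b$, with regularity taken over $S$. So it suffices to show $\reg(B)\leq b$.

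To bound $\reg(B)$, I would view $R^{(a)}=\bigoplus_k H^0(\PP^b,\mc{O}_{\PP^b}(ak))$ as a graded $S$-module via $\a$. Then there is a short exact sequence of graded $S$-modules
\[ 0 \lra B \lra R^{(a)} \lra Q \lra 0, \qquad Q_k = \coker(\a_k). \]
By the surjectivity part of Theorem~\ref{thm:Foulkes-Howe-map}, $Q_k=0$ for $k\geq b$. Also $Q_0=0$ trivially, and $Q_k=0$ for $k<0$. Hence $Q$ has finite length and is concentrated in degrees $1,\dots,b-1$.

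Next I would compute local cohomology with respect to the maximal ideal $\mf{m}$ of $S$. The module $R^{(a)}$ is the module of twisted global sections $\bigoplus_k H^0\bigl(\PP^d,(\nu_a)_*\mc{O}_{\PP^b}\oo\mc{O}_{\PP^d}(k)\bigr)$, so it is saturated: $H^0_{\mf{m}}(R^{(a)})=H^1_{\mf{m}}(R^{(a)})=0$. For $i\geq 2$ we have $H^i_{\mf{m}}(R^{(a)})_k=H^{i-1}(\PP^b,\mc{O}_{\PP^b}(ak))$. This vanishes except when $i=b+1$ and $ak\leq -b-1$, that is, for $k\leq -\lceil (b+1)/a\rceil\leq -1$. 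The top nonzero degree $k$ of $H^{b+1}_{\mf{m}}$ therefore satisfies $k+b+1\leq b$.

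Applying the long exact sequence of local cohomology to the short exact sequence above then gives the following:
\[ H^0_{\mf{m}}(B)=0, \qquad H^1_{\mf{m}}(B)\cong Q, \qquad H^i_{\mf{m}}(B)\cong H^i_{\mf{m}}(R^{(a)}) \text{ for } i\geq 2. \]
Hence $\reg(B)=\max_i\{\op{end}(H^i_{\mf{m}}(B))+i\}$ is bounded by $\max\{(b-1)+1,\ b\}=b$, which gives the result.

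The step that needs care is the identification of the Castelnuovo--Mumford regularity of $B$ over $S$ with the local-cohomology computation. This requires that local cohomology of the $S$-module $R^{(a)}$ agrees with local cohomology over $R$ of the corresponding Veronese module, since $\mf{m}R^{(a)}$ is primary to the irrelevant ideal. That holds because $\nu_a$ is finite, so the relevant sheaf cohomology can equally be computed on $\PP^b$. It is also where Theorem~\ref{thm:Foulkes-Howe-map} enters essentially, through the vanishing of $Q$ in degrees $\geq b$. Finally, I would check that $I(X)\neq 0$, for instance via the nonzero $(b+1)$-minors of \eqref{eq:def-omega-onPd} when $a,b\geq 2$. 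Then the regularity bound, together with the vanishing of $I(X)$ in degrees $\leq b$, forces $I(X)$ to be generated exactly in degree $b+1$ with a linear resolution.
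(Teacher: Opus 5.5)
Your proof is correct and follows the paper's route. You get $I(X)_{\le b}=0$ from injectivity of $\a_k$, and then bound $\reg(B)\le b$ by comparing $B$ with $R^{(a)}$, which agree in degrees $\ge b$ by the surjectivity of $\a_k$.

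There are two minor differences. First, you bound $\reg(R^{(a)})$ by a direct local-cohomology computation on $\PP^b$, recovering $b-\lfloor b/a\rfloor$ since $\lceil (b+1)/a\rceil=\lfloor b/a\rfloor+1$, whereas the paper cites Cox--Materov for this value. Second, you work with the sequence $0\to B\to R^{(a)}\to Q\to 0$ rather than truncating at degree $b$; the paper itself uses this sequence later when computing $\beta_d$.
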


\begin{proof}
 By Theorem~\ref{thm:Foulkes-Howe-map}, if $k\leq b$ then $\a_k$ is injective, hence $I(X)_k=0$. The minimal generators of $I(X)$ occur therefore in degree $(b+1)$ or higher. To conclude the proof of the theorem, it is then enough to verify that the Castelnuovo--Mumford regularity of $B=S/I(X)$ is less than or equal to $b$. By Theorem~\ref{thm:Foulkes-Howe-map}, $\a_k$ is surjective for $k\geq b$, so $B_k = R^{(a)}_k$ in this range. It follows that
 \[ 
 \reg(B) \le \max \{\reg(B_{\geq b}), b\} = \max\{\reg (R^{(a)}_{\geq b} ), b\} = \max\{\reg (R^{(a)}),b\}.
 \]
 Finally, using \cite{cox-materov}*{Theorem~1.4}, we get that 
 \[ 
 \reg\left(R^{(a)}\right)= b-\left\lfloor\frac{b}{a}\right\rfloor \leq b. \qedhere
 \]
\end{proof}

\subsection{The ideal of $X$ is determinantal}
\label{subsec:IX-determinantal}

We next verify the assertion in Theorem~\ref{thm:Xab} regarding the determinantal structure for $I(X)$. As noted in the Introduction, the $(b+1)$-minors of \eqref{eq:def-omega-onPd} vanish on $X$. We can view the space spanned by these minors as the image of a natural map
\[ \beta\colon  \bw^{b+1}(\Sym^{d+b-2}U) \cong \bw^{b+1}(\Sym^b U) \oo \bw^{b+1}(\Sym^{d+b-2}U) \lra I(X)_{b+1} \subseteq S.\]
By Theorem~\ref{thm:I-Xab}, $I(X)$ is generated in degree $(b+1)$, so we need to check that $\beta$ is surjective.

\begin{lemma}\label{lem:beta-surj}
 We have an exact sequence
 \[ \bw^{b+1}(\Sym^{d+b-2}U) \overset{\beta}{\lra} \Sym^{b+1}(\Sym^{d}U) \overset{\a_{b+1}}{\lra} \Sym^{a(b+1)}(\Sym^b U) \lra 0.\]
 In particular, since $I(X)_{b+1} = \ker(\a_{b+1})$, the image of $\beta$ generates the ideal $I(X)$.
\end{lemma}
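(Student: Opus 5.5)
The plan is to realize the sequence as the hypercohomology of the extended complex $\widehat{\mc{S}}_{b+1}^{\bullet}$ on $\PP^b$. The terms are
\[
\widehat{\mc{S}}_{b+1}^{-i}=\bw^i(\Sym^{d+b-2}U)\oo\Sym^{b+1-i}(\Sym^dU)\oo\mc{O}_{\PP^b}(-i),
\]
for $0\le i\le b+1$, together with $\widehat{\mc{S}}^1_{b+1}=\mc{O}_{\PP^b}((b+1)a)$. Almost all of these terms are acyclic. The twists $\mc{O}(-i)$ for $1\le i\le b$ have no cohomology at all. The term $\widehat{\mc{S}}^0_{b+1}$ contributes only $H^0=\Sym^{b+1}(\Sym^dU)$, and $\widehat{\mc{S}}^1_{b+1}$ contributes only $H^0=\Sym^{a(b+1)}(\Sym^bU)$. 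The one exception is $\widehat{\mc{S}}^{-b-1}_{b+1}=\bw^{b+1}(\Sym^{d+b-2}U)\oo\mc{O}(-b-1)$, whose only cohomology is
\[
H^b=\bw^{b+1}(\Sym^{d+b-2}U)\oo\bw^{b+1}(\Sym^bU)^{\vee}\cong\bw^{b+1}(\Sym^{d+b-2}U).
\]

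First, I will run the hypercohomology spectral sequence $E_1^{p,q}=H^q(\widehat{\mc{S}}^p)$. On the row $q=0$ the only nonzero terms are in columns $0$ and $1$, joined by the map $\a_{b+1}$. The only other nonzero entry is $E_1^{-b-1,b}$. Its differential $d_{b+1}$ lands in $E^{0,0}$, because total degree goes from $-1$ to $0$. Hence
\[
\bb{H}^1=\coker(\a_{b+1}),\qquad \bb{H}^0=\ker(\a_{b+1})/\op{Im}(d_{b+1}),\qquad \bb{H}^{-1}=\ker(d_{b+1}).
\]

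Second, I will show that $\bb{H}^0$ and $\bb{H}^1$ vanish. This is exactly \eqref{eq:Hi-hat-Sk} with $k=b+1$. There $\widehat{\mc{S}}_{b+1}^{\bullet}\simeq\mc{W}_{b-2}^{\bullet}(a-1)[1]$, which sits in negative degrees and has terms with no higher cohomology, so $\bb{H}^i=0$ for $i\ge0$. Consequently $\a_{b+1}$ is surjective (this was already known) and $\ker(\a_{b+1})=\op{Im}(d_{b+1})$.

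Third, I will identify $d_{b+1}$ with $\beta$; I expect this to be the main point needing care. The differential $d_{b+1}$ depends only on the non-extended complex $\mc{S}_{b+1}^{\bullet}=\Sym^{b+1}(\varphi)$, since the extra term in degree $1$ does not interact with column $-b-1$ at page $b+1$. This puts us in the setup of Section~\ref{subsec:max-minors} with $r=b$, $V_1=\Sym^{d+b-2}U$, $V_0=\Sym^dU$ and $W=\Sym^bU$. The Proposition there shows that the image of $d_r$ is spanned by the $(b+1)$-minors of the matrix $\gamma$ of linear forms in $\Sym^dU$ encoding $\varphi$.

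It remains to check that this $\gamma$ is the same matrix as the one defining \eqref{eq:def-omega-onPd}. Both come from the single trilinear $\SL_2$-invariant form $\omega$ of \eqref{eq:bilinear-Jacobian}, with the roles of the factors permuted via $U\cong U^\vee$. Their maximal minors therefore span the same space, namely the image of $\beta$ up to an $\SL_2$-equivariant identification. This gives $\op{Im}(\beta)=\op{Im}(d_{b+1})=\ker(\a_{b+1})$, which proves exactness. Finally, since $I(X)$ is generated in degree $b+1$ by Theorem~\ref{thm:I-Xab}, the image of $\beta$ generates $I(X)$.
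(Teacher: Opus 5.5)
Your proposal is correct and follows the paper's proof. You compute the hypercohomology of $\widehat{\mc{S}}_{b+1}^{\bullet}$ via the spectral sequence, whose only data are $\alpha_{b+1}$ and the long differential out of $H^b$ of the $\bw^{b+1}$ term; you identify that differential with $\beta$ using Section~\ref{subsec:max-minors}; and you conclude from \eqref{eq:Hi-hat-Sk} with $k=b+1$. Your two additional checks make explicit what the paper leaves implicit: that the extended term does not change the image of $d_{b+1}$, and that the matrices of $\varphi$ and of \eqref{eq:def-omega-onPd} come from the same $\SL_2$-invariant trilinear form and so have the same maximal minors.
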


\begin{proof} We consider the extended complex $\widehat{\mc{S}}_{b+1}^{\bullet}$ as in Section~\ref{subsec:FH-map}, noting that $\mc{H}^i(\widehat{\mc{S}}_{b+1}^{\bullet}) = 0$ for $i=0,1$, and that the cohomology groups of the sheaves $\widehat{\mc{S}}_{b+1}^i$ vanish identically for $-b\leq i\leq -1$. Moreover, the only non-zero cohomology group of the sheaf $\widehat{\mc{S}}_{b+1}^{-b-1}$ is
\[ H^b(\PP^b,\widehat{\mc{S}}_{b+1}^{-b-1}) = \bw^{b+1}(\Sym^{d+b-2}U),\]
while $\widehat{\mc{S}}_{b+1}^{0}$, $\widehat{\mc{S}}_{b+1}^{1}$ have non-zero cohomology
\[ H^0(\PP^b,\widehat{\mc{S}}_{b+1}^0) = \Sym^{b+1}(\Sym^{d}U),\quad H^0(\PP^b,\widehat{\mc{S}}_{b+1}^1) = \Sym^{a(b+1)}(\Sym^b U).\]
It follows using the discussion in Section~\ref{subsec:max-minors} that the hypercohomology of $\widehat{\mc{S}}_{b+1}^{\bullet}$ is the same as the cohomology of the complex
\[ 0 \lra \bw^{b+1}(\Sym^{d+b-2}U) \overset{\beta}{\lra} \Sym^{b+1}(\Sym^{d}U) \overset{\psi_{b+1}}{\lra} \Sym^{a(b+1)}(\Sym^b U) \lra 0,\]
where the middle term is in cohomological degree $0$. To conclude, it is then enough to check that $\bb{H}^i(\widehat{\mc{S}}_{b+1}^{\bullet})=0$ for $i\geq 0$, which is the special case $k=b+1$ of (\ref{eq:Hi-hat-Sk}).
\end{proof}

\subsection{The algebraic Betti numbers of $X$}
\label{subsec:Betti-X}

We compute the Betti numbers of $X$ explicitly, which in particular verifies the last assertion in Theorem~\ref{thm:Xab} regarding the projective dimension of $I(X)$. The coordinate ring $B=S/I(X)$ has positive depth, hence its minimal free $S$-resolution takes the form
\[ 0 \lla B \lla S \lla F_1 \lla F_2\lla \cdots \lla F_d \lla 0.\]
Moreover, by Theorem~\ref{thm:I-Xab} we can write $F_i = S(-b-i)^{\beta_i}$, where
\begin{equation*}\label{eq:bet-i}
 \beta_i = \dim_{\bb{C}}\Tor_i^S(B,\CC)_{b+i}\quad\text{ for }i=1,\dots,d.
\end{equation*}

\begin{proposition}\label{prop:explicit-betti}
    If $a,b\geq 2$ then the Betti numbers are given by
    \[\beta_i = \sum_{j = 0}^{i-1} (-1)^{i+j-1} \binom{d+1}{j}\left(\binom{d+b+i-j}{d} - \binom{d+a(i-j)+b}{b} \right).\]
    In particular,
    \[\beta_{1} = \binom{d+b+1}{b+1} - \binom{d+a+b}{b}\quad\text{and}\quad \beta_{d} = \binom{d-a+b}{b}-\binom{d+b-1}{b-1},\]
    so $B$ has projective dimension $d$, and $I(X)$ has projective dimension $d-1$.
\end{proposition}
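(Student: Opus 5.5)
Since the resolution of $B$ is linear (Theorem~\ref{thm:I-Xab}), each $\beta_i$ can be read off from the Hilbert series of $B$. Because $F_i=S(-b-i)^{\beta_i}$, the graded pieces of $F_i$ in different homological degrees sit in distinct internal degrees. So I will compute the Hilbert function $h_B(k)=\dim B_k$ for all $k$ and then recover the $\beta_i$ by inverting $\sum_i(-1)^i\beta_i t^{b+i}$ against $(1-t)^{-(d+1)}$.

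By Theorem~\ref{thm:Foulkes-Howe-map}, $\alpha_k$ is injective for $k\le b$, so $h_B(k)=\binom{d+k}{k}$ there. For $k\ge b$ it is surjective, so $h_B(k)=\dim\Sym^{ak}(\Sym^bU)=\binom{ak+b}{b}$. Set
\[
g(k)=\binom{d+k}{d}-\binom{ak+b}{b}\quad\text{for }k\ge 0.
\]
Since both formulas agree at $k=b$, we get $\dim I(X)_k=g(k)$ for $k\ge b$ and $0$ for $k\le b$. For $k\le b-1$ we have $ak+b\le d+k$ (as $(a-1)k\le (a-1)b$), but $g(k)$ need not vanish there. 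I will therefore only use $\dim I(X)_k=g(k)$ for $k\ge b+1$.

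Next, exactness of $0\to F_d\to\cdots\to F_1\to I(X)\to0$ in internal degree $b+i$ gives
\[
\dim I(X)_{b+i}=\sum_{m\ge1}(-1)^{m-1}\beta_m\binom{d+i-m}{d}.
\]
Only $m\le i$ contributes, since $\binom{d+i-m}{d}=0$ for $0<m-i\le d$. Write this as the identity $I(t)=P(t)/(1-t)^{d+1}$, where $I(t)=\sum_i \dim I(X)_{b+i}\,t^i$ and $P(t)=\sum_m(-1)^{m-1}\beta_m t^m$. Inverting, $P(t)=(1-t)^{d+1}I(t)$, whose coefficient of $t^i$ is
\[
(-1)^{i-1}\beta_i=\sum_{j=0}^{i}(-1)^j\binom{d+1}{j}\dim I(X)_{b+i-j}.
\]
The $j=i$ term vanishes because $I(X)_b=0$, and for $j\le i-1$ we substitute $\dim I(X)_{b+i-j}=g(b+i-j)=\binom{d+b+i-j}{d}-\binom{d+a(i-j)+b}{b}$. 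Multiplying through by $(-1)^{i-1}$ gives exactly the stated formula. For $i=1$ only $j=0$ survives, yielding $\beta_1$.

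The main obstacle is $\beta_d$ and the vanishing $\beta_i=0$ for $i>d$. The bound $\beta_i=0$ for $i>d$ follows from positive depth via Auslander–Buchsbaum. For $\beta_d$, rather than evaluating the alternating sum, I would use the identity $P(1)=0$ and its derivatives, or more cleanly duality. Since $B$ has a linear resolution of length at most $d$, the top Betti number $\beta_d=\dim\Tor_d(B,\CC)_{b+d}$ equals the dimension of the socle-type piece $H^0$ of the Koszul cohomology.

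Equivalently, extend the polynomial $p(k)=\binom{ak+b}{b}$. It agrees with $h_B(k)$ for $k\ge b$ and is a polynomial of degree $b<d+1$. The $(d+1)$-st finite difference of $h_B$ therefore only sees the correction $h_B(k)-p(k)=\binom{d+k}{d}-\binom{ak+b}{b}$ for $0\le k\le b-1$, and also for $k<0$, where $h_B=0$ and $p$ is nonzero. Concretely, $\beta_d$ is the leading coefficient of the numerator, i.e.\ $(-1)^{d-1}$ times the coefficient of $t^{d+b}$ in $(1-t)^{d+1}H_B(t)$. Writing $H_B=\sum_k p(k)t^k+\text{correction}$, the polynomial part $\sum_{k\ge0} p(k)t^k$ has numerator of degree $\le b$. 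So the coefficient comes from the correction $\sum_{k<0}(-p(k))t^k$ reflected via $\sum_{k\in\bb{Z}}p(k)t^k=0$, i.e.\ $\sum_{k\ge0}p(k)t^k=-\sum_{k<0}p(k)t^k$. The negative values $p(-1),\dots$ produce $\binom{b-a}{b}$-type terms. I will carry this out and expect to land on $\binom{d-a+b}{b}-\binom{d+b-1}{b-1}$ after reindexing $k\mapsto -1-k$ and applying Pascal's rule.

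Finally, $\beta_d>0$ requires $a\ge2$: compare $\binom{d-a+b}{b}$ with $\binom{d+b-1}{b-1}=\frac{b}{d+b}\binom{d+b}{b}$. Then $\operatorname{pd}B=d$ and $\operatorname{pd}I(X)=d-1$. This positivity check is the other spot I would verify carefully, perhaps by a ratio estimate $\binom{d-a+b}{b}/\binom{d+b}{b}\ge\prod_{t=1}^{a}\frac{d+1-t}{d+b+1-t}$.
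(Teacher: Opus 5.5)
\paragraph{The general formula.} Your derivation of the formula for $\beta_i$, and hence of $\beta_1$, is the paper's argument. It uses the Hilbert function of $I(X)$ from Theorem~\ref{thm:Foulkes-Howe-map}, linearity of the resolution, and multiplication by $(1-t)^{d+1}$.

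\paragraph{The gap in $\beta_d$.} You never carry out the computation of $\beta_d$, and the mechanism you point to is wrong. Write $h_B(k)=\dim B_k$. The coefficient of $t^{d+b}$ in $(1-t)^{d+1}H_B(t)$ is $\sum_{j=0}^{d+1}(-1)^j\binom{d+1}{j}h_B(d+b-j)$, which involves only $h_B(k)$ for $b-1\le k\le d+b$. So negative $k$ never enter. The values $p(-1),p(-2),\dots$ (your ``$\binom{b-a}{b}$-type terms'') contribute nothing, and the reflection $\sum_{k\ge0}p(k)t^k=-\sum_{k<0}p(k)t^k$ is a detour. There is also a sign slip: that coefficient is $(-1)^d\beta_d$, not $(-1)^{d-1}\beta_d$; the latter sign belongs to $H_{I(X)}$.

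The correct bookkeeping is $H_B(t)=\sum_{k\ge0}p(k)t^k+C(t)$ with $C(t)=\sum_{k=0}^{b-1}\bigl(h_B(k)-p(k)\bigr)t^k$. After multiplying by $(1-t)^{d+1}$, the first summand lives in degrees $\le d$, as you observed. The second has coefficient $(-1)^{d+1}\bigl(h_B(b-1)-p(b-1)\bigr)$ in degree $d+b$. Hence
\[
\beta_d=p(b-1)-h_B(b-1)=\binom{a(b-1)+b}{b}-\binom{d+b-1}{d},
\]
which is the claimed value.

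Equivalently, take your own formula at $i=d$. Since $g(k)=\binom{d+k}{d}-\binom{ak+b}{b}$ is a polynomial of degree $d$ in $k$, we have $\sum_{j=0}^{d+1}(-1)^j\binom{d+1}{j}g(b+d-j)=0$. The $j=d$ term vanishes because $g(b)=0$, so $\beta_d=-g(b-1)$. This is the ``binomial identity'' the paper mentions and sidesteps.

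\paragraph{Comparison with the paper's route.} The paper uses the sequence $0\to B\to R^{(a)}\to M\to 0$. Cohen--Macaulayness of $R^{(a)}$ gives $\Tor_d^S(B,\CC)_{d+b}\cong\Tor_{d+1}^S(M,\CC)_{d+b}=(0:_M\mathfrak{m})_{b-1}$, and this equals $M_{b-1}$ because $M_b=0$. Your corrected computation is the Hilbert-series shadow of this, since $\sum_{k\ge0}p(k)t^k=H_{R^{(a)}}(t)$ and $-C(t)=H_M(t)$. It is shorter, but it needs linearity of the resolution to rule out cancellation in the numerator. The paper's argument identifies $\Tor_d^S(B,\CC)_{d+b}$ with $M_{b-1}$ directly. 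Your remark about a ``socle-type piece'' makes sense only for $M$, not for $B$, which has positive depth.

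\paragraph{Positivity of $\beta_d$.} The paper only asserts that $\beta_d>0$. Your displayed ratio is an equality, not an estimate. What must actually be checked is
\[
a+1-\tfrac{a}{b}>\prod_{s=0}^{a-1}\Bigl(1+\tfrac{b-1}{d-s}\Bigr).
\]
The right side is less than $(1+\frac1a)^a$, which settles $a\ge 3$; the case $a=2$ is a direct check.
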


\begin{proof} By Theorem~\ref{thm:Foulkes-Howe-map}, the Hilbert function of $I(X)$ is given in degrees $k\geq b+1$ by
\[
\mathrm{HF}_{I(X)}(k) = \dim_{\CC}\Sym^k(\Sym^d U) -  \dim_{\CC}\Sym^{ak}(\Sym^b U) = \binom{d+k}{d}-\binom{ak+b}{b}.
\]
The Hilbert series of $I(X)$ is then $\mathrm{HS}_{I(X)}(z) = z^{b+1}\cdot G(z)$, where
\[G(z) = \sum_{k\geq 0} \mathrm{HF}_{I(X)}(b+1+k) \cdot z^k = \sum_{k\geq 0} \left(\binom{d+b+1+k}{d} - \binom{d+a(1+k)+b}{b} \right)\cdot z^k.\]
Using the resolution $F_{\bullet \geq 1}$ of $I(X)$, we can also write 
\[\mathrm{HS}_{I(X)}(z) = \frac{\b_1z^{b+1}-\b_2 z^{b+2}+\cdots+(-1)^{d-1}\b_d z^{b+d}}{(1-z)^{d+1}}.\] 
It follows that
\[\sum_{i=1}^d (-1)^{i-1}\b_i z^{i-1} = (1-z)^{d+1}\cdot G(z)\]
and the desired formula for $\b_i$ follows by equating the coefficients of $z^{i-1}$. The formula for $\b_1$ follows immediately, while that for $\b_d$ requires a binomial identity.

We compute $\b_d$ in a different way, by considering the short exact sequence
\[ 0 \lra B \lra R^{(a)} \lra M \lra 0\]
where $M$ is a module of finite length. It follows from \cite{DE-syzygies}*{Corollary 4.4} and Theorem~\ref{thm:Foulkes-Howe-map} that
\begin{equation}\label{eq:regM-flen}
\op{reg}(M) = \max\{i : M_i\neq 0\} = b-1.
\end{equation}
Since $R^{(a)}$ is Cohen--Macaulay of dimension $b+1$, it follows that $\Tor^S_i(R^{(a)},\CC)=0$ for $i>d-b$. The long exact sequence for $\Tor$ groups then yields
\[ \Tor^S_{d+1}(M,\CC)_{d+b} \cong \Tor^S_d(B,\CC)_{d+b},\]
and the left side identifies via Koszul homology with the degree $(b-1)$ elements of $M$ annihilated by the maximal ideal. Since $M_b=0$, we get $(0:_M \mf{m})_{b-1}=M_{b-1}$ which implies that
\[ \b_d = \mathrm{HF}_M(b-1) = \binom{d-a+b}{b}-\binom{d+b-1}{b-1}.\]
Since $\b_d>0$ when $a,b\geq 2$, this proves that $B$ has projective dimension $d$, and $I(X)$ has projective dimension $d-1$, concluding our proof.
\end{proof}

\subsection{Examples}\label{sec:differentHermite} We conclude by showing that $\alpha_b$ is not a scalar multiple of the classical Hermite isomorphism in general. Consider the case $\lambda = (2,2)$, and recall the map 
\[\alpha_2 \colon \Sym^2(\Sym^4 U) \lra \Sym^4(\Sym^2 U)\]
from (\ref{eq:weird-Hermite}). Following the notation of \cite{landsberg}*{Chapter~9}, we denote the corresponding classical Hermite isomorphism by $h_{2,4}$. As $\SL(U)$-representations, both $\Sym^2(\Sym^4 U)$ and $\Sym^4(\Sym^2 U)$ decompose as 
\[\Sym^8 U \oplus \Sym^4 U \oplus \Sym^0 U.\] 
We let $z_i = x_1^{4-i}x_2^{i}$ for $0\le i \le 4$ and let $y_j = x_1^{2-j}x_2^j$ for $0 \le j \le 2$, so that $\Sym^4 U$ has basis $\{z_0,z_1,\dots,z_4\}$ and $\Sym^2 U$ has basis $\{y_0,y_1,y_2\}$. The corresponding highest weight vectors, $v_8,v_4,v_0$ for $\Sym^2(\Sym^4 U)$, and $w_8,w_4,w_0$ for $\Sym^4(\Sym^2 U)$, are given by
\begin{align*}
    &v_8 = z_0^2, && v_4 = z_0z_2 - z_1^2, && v_0 = z_0z_4 - 4z_1z_3 + 3z_2^2,\\
    &w_8 = y_0^4, && w_4 = y_0^2y_1^2 - y_0^3y_2, && w_0 = y_1^4 - 2 y_0y_1^2 y_2 + y_0^2 y_2^2.
\end{align*}
By Schur's lemma, any $\SL(U)$-equivariant map $\theta \colon \Sym^2(\Sym^4 U) \lra \Sym^4(\Sym^2 U)$ is uniquely determined by the triple $p(\theta) = (a_8,a_4,a_0) \in \CC^3$ where $\theta(v_i) = a_i w_i$ for $i = 8,4,0$. A direct computation gives $p(\alpha_2) = (1, -\frac{1}{3}, \frac{4}{3})$ and $p(h_{2,4}) = (1,\frac{1}{4}, \frac{1}{2})$. In particular, $\alpha_2$ and $h_{2,4}$ are non-proportional isomorphisms.

\section{Regularity and syzygies of some primary ideals}
\label{sec:reg-Iab}

This section collects some results regarding the Castelnuovo--Mumford regularity of the ideals $I_{a,b}$, as well as the minimal free resolutions of $I_{a,b}^{b-1}$ and $I_{a,3}$. We write $S=\Sym(\Sym^b U)$ for the homogeneous coordinate ring of $\PP^b$, the ambient ring for the ideal $I_{a,b}$.

\subsection{The regularity of $I_{a,b}$}
\label{subsec:reg-Iab}

In this section we derive Theorem~\ref{thm:reg-Iab} from the results in \cite{weyman-mult}. Using \eqref{eq:regM-flen}, if $R=S/I$ is an Artinian graded algebra then 
\[
\op{reg}(R) = \max\{i : R_i\neq 0\},
\]
and moreover we have $\op{reg}(I) = 1 + \op{reg}(R)$.

\begin{proof}[Proof of Theorem~\ref{thm:reg-Iab}]
We write $R=S/I_{a,b}$, set $c=\left\lfloor\frac{b+2}{2}\right\rfloor$, and let $u_0,\dots,u_b$ denote the standard basis of $\Sym^b U$. By \cite{weyman-mult}*{Theorem 2.4}, the minimal generators of $I_{a,b}$ form a Gr\"obner basis, and the initial ideal $J_{a,b}$ is generated by the monomials 
\[
\{u_i^a \mid i=0,\dots,b\} \cup \{u_i^j u_{i+1}^{a-j} \mid i=0,\dots,b-1,\ 1 \le j \le a-1\}.
\]
Let $u_0^{d_1} \cdots u_b^{d_b}$ be a nonzero monomial in the quotient ring $S/ J_{a,b}$. If $b$ is odd, then we have
\[
d_0 + \cdots + d_b = (d_0+d_1) + \cdots + (d_{b-1}+d_b) \le \frac{b+1}{2}(a-1) = c(a-1).
\]  
Moreover, the monomial $u_0^{a-1} u_2^{a-1} u_4^{a-1} \cdots u_{b-1}^{a-1}$ has degree exactly $c(a-1)$, and it is non-zero in $S/J_{a,b}$. If instead $b$ is even, then we have
\[
d_0 + \cdots + d_b = (d_0+d_1) + \cdots + (d_{b-2}+d_{b-1}) + d_b \le \frac{b}{2}(a-1) + (a-1) = c(a-1).
\]
Moreover, the monomial $u_0^{a-1} u_2^{a-1} u_4^{a-1} \cdots u_b^{a-1}$ has degree exactly $c(a-1)$, and it is non-zero in $S/J_{a,b}$. In both cases, we conclude that $(S/J_{a,b})_d=0$ if $d> c(a-1)$ and $(S/J_{a,b})_{c(a-1)} \ne 0$. Since the Hilbert functions of $R$ and $S/J_{a,b}$ agree, this concludes our proof.
\end{proof}

\begin{remark}
If $b=3$, then Theorem~\ref{thm:reg-Iab} implies that $R_i=0$ whenever $i \ge 2a-1$. This implies that the composition (where the second map is multiplication)
\[
\Sym^{a-1}(\Sym^3 \CC^2) \otimes \Sym^{3a}(\CC^2) \to \Sym^{a-1}(\Sym^3 \CC^2) \otimes \Sym^{a}(\Sym^3 \CC^2) \to \Sym^{2a-1}(\Sym^3 \CC^2)
\]
is surjective, which recovers \cite{faenzi}*{Proposition 3.11}. We note that the proof in loc. cit. appeals to Hermite reciprocity, but does not make precise which Hermite reciprocity isomorphism is being used (see also the discussion regarding \eqref{eq:weird-Hermite}, and Section~\ref{sec:self-duality}).
\end{remark}

\subsection{The resolution of $I_{a,b}^{b-1}$}
\label{subsec:res-Iab-b-1}

Conjecture~\ref{conj:reg-pow-conj} implies that $I_{a,b}^{b-1}$ has Castelnuovo--Mumford regularity $ab-1$. We confirm this by providing the minimal free resolution of $I_{a,b}^{b-1}$. If $a=1$ then $I_{a,b}=\mf{m}$ is the maximal homogeneous ideal and $I_{a,b}^{b-1}$ has a well-known Eagon--Northcott resolution, so we will consider the non-trivial case $a\geq 2$.

\begin{theorem}\label{thm:res-Iab-b-1}
Suppose that $a\geq 2$, $d=ab$, and consider the Betti numbers of $I_{a,b}^{b-1}$,
\[ \beta_{i,j} = \dim_{\bb{C}}\Tor_i^S\left(I_{a,b}^{b-1},\CC\right)_{i+j}.\]
The non-vanishing Betti numbers are concentrated in two linear strands, and are given by
    \begin{align*}
        \beta_{i,d-a} &= \binom{d+b-1-i}{d}\cdot \binom{d+b-1}{i}\quad \text{for } i=0,\dots,b-1,\\
        \beta_{i,d-1} &=\binom{d + b -1}{b-i} \cdot \binom{d+i-2}{d}\quad \text{for }i=2,\dots,b.
    \end{align*}
    In particular, the Castelnuovo--Mumford regularity of $I_{a,b}^{b-1}$ is $\reg(I_{a,b}^{b-1}) = d-1$.
\end{theorem}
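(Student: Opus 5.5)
The plan is to read the resolution of $I_{a,b}^{b-1}$ off the quasi-isomorphism of Proposition~\ref{prop:iso-Wr-Wr-1}, made into an exact complex of sums of line bundles, and then apply Lemma~\ref{lem:induced-H0}.

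\textbf{Step 1: the cone.} The proof of Proposition~\ref{prop:iso-Wr-Wr-1} gives an honest injective chain map
\[ \iota\colon \mc{A}^{\bullet}:=\mc{W}_{b-2}^{\bullet}(-a-1)[1]\lra \widehat{\mc{W}}_{b-1}^{\bullet}. \]
Its cokernel $\widehat{\mc{G}}^{\bullet}$ is exact, so the mapping cone $\mc{C}^{\bullet}=\op{cone}(\iota)$ is exact. We have $\mc{C}^i=\mc{A}^{i+1}\oplus \widehat{\mc{W}}_{b-1}^i$, and every term is a direct sum of line bundles on $\PP^b$.

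\textbf{Step 2: the shape of $\mc{C}^\bullet$.} I check that $\mc{C}^\bullet$ lives in degrees $-b,\dots,1$, exactly as Lemma~\ref{lem:induced-H0} requires:
\begin{itemize}
\item $\mc{C}^1=\widehat{\mc{W}}_{b-1}^1=\mc{O}_{\PP^b}(a(b-1))$;
\item $\mc{C}^0=\widehat{\mc{W}}_{b-1}^0$, since $\mc{A}^1=\mc{W}_{b-2}^2=0$;
\item for $i\ge 1$, $\mc{C}^{-i}=\mc{W}_{b-1}^{-i}\oplus \mc{W}_{b-2}^{-i+2}(-a-1)$, where the second summand is nonzero only for $2\le i\le b$.
\end{itemize}
I then twist $\mc{C}^\bullet$ by $\mc{O}_{\PP^b}(-(d-a))$ and apply Lemma~\ref{lem:induced-H0}. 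The result is an acyclic complex of free $S$-modules
\[ 0\lra F_b\lra\cdots\lra F_0\overset{\pd}{\lra} S, \qquad F_i=H^0_*(\mc{C}^{-i}(a-d)). \]

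\textbf{Step 3: the image of $\pd$ is $I_{a,b}^{b-1}$.} The map $F_0=\Sym^{b-1}(\Sym^{d+b-2}U)^{\ldots}$ — more precisely $F_0=\bw^{b-1}(\Sym^{d+b-2}U)\oo S(-(d-a))$ — maps to $S$. Via the isomorphism $\mc{W}_{b-1}^{\bullet}\cong\mc{S}_{b-1}^{\bullet}$ of Theorem~\ref{thm:self-duality}, $\pd$ is the $S$-linear extension of the degree $b-1$ part of the Foulkes--Howe-type multiplication $\Sym^{b-1}(\Sym^d U)\to\Sym^{a(b-1)}(\Sym^bU)$. Its image is therefore the ideal generated by products of $b-1$ elements of $\Sym^dU$, which is $I_{a,b}^{b-1}$. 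I expect this identification to be the main point needing care. One must know that the augmentation $\mc{S}_{b-1}^0\to\mc{O}(a(b-1))$ is the multiplication map, which follows from $\coker\varphi=\mc{O}(a)$. One must also know that the self-duality isomorphism matches it with $\widehat{\mc{W}}_{b-1}^0\to\widehat{\mc{W}}_{b-1}^1$ up to a unit. This holds because both extended complexes are augmented by their $\mc{H}^0$.

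\textbf{Step 4: Betti numbers and minimality.} The first strand comes from $\mc{W}_{b-1}^{-i}(a-d)$, which gives $S(-(d-a+i))$ with multiplicity
\[ \dim\Sym^i(\Sym^dU)\cdot\dim\bw^{b-1-i}(\Sym^{d+b-2}U). \]
By the character identity of Section~\ref{sec:char-comp}, this equals $\binom{d+b-1-i}{d}\binom{d+b-1}{i}$, for $i=0,\dots,b-1$. The second strand comes from $\mc{W}_{b-2}^{-(i-2)}(-a-1)(a-d)$, which gives $S(-(d-1+i))$ with multiplicity $\binom{d+i-2}{d}\binom{d+b-1}{b-i}$, for $i=2,\dots,b$.

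Minimality follows from degree considerations. Components of the differential inside a strand are linear. Components between the two strands have degree $\pm(a-1)$. Since $a\ge2$, no component of the differential is a nonzero scalar, so the resolution is minimal and the Betti numbers are as stated. The regularity is then $\max(d-a,\,d-1)=d-1$.
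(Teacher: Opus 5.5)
Your proposal is correct and is essentially the paper's proof. The paper takes the cone of the composite $\mathcal{W}_{b-2}^{\bullet}(-a-1)[1]\to\widehat{\mathcal{W}}_{b-1}^{\bullet}\cong\widehat{\mathcal{S}}_{b-1}^{\bullet}$, which is isomorphic to your cone, then applies Lemma~\ref{lem:induced-H0} and reads off the terms as you do.

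One small fix is needed in the minimality argument of Step 4. The only off-diagonal block of the cone differential is the inclusion $\mathcal{W}_{b-2}^{\bullet}(-a-1)[1]\to\widehat{\mathcal{W}}_{b-1}^{\bullet}$, whose components have degree $a$, not $a-1$. There is no component from the first strand to the second; a pure degree count would give degree $2-a$ there, which is $0$ when $a=2$. So minimality follows from this block-triangular shape of the cone differential, not from the bound $\pm(a-1)$ you state.
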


\begin{proof} Combining Proposition~\ref{prop:iso-Wr-Wr-1} with Theorem~\ref{thm:self-duality} we get a quasi-isomorphism
\[\mc{W}_{b-2}^{\bullet}(-a-1)[1]\overset{\sim}{\lra}  \widehat{\mc{S}}_{b-1}^{\bullet}\]
and we denote by $\mc{C}^{\bullet}$ the associated mapping cone. It is an exact complex with $\mc{C}^i=0$ for $i>1$ and $i<-b$, 
\begin{align*}
\mc{C}^1 &= \mc{O}_{\PP^b}(a(b-1)) = \mc{O}_{\PP^b}(d-a),\\
\mc{C}^0 &= \Sym^{b-1}(\Sym^d U)\oo\mc{O}_{\PP^b}, \\
\mc{C}^{-i} &= \mc{S}_{b-1}^{-i}\oplus\mc{W}_{b-2}^{-i+2}(-a-1)\text{ for }i=1,\dots,b.
\end{align*}
In particular, each $\mc{C}^i$ is a direct sum of line bundles, and the differential $\pd\colon \mc{C}^0\lra\mc{C}^1$ is obtained by taking the $(b-1)$-st power of the linear series $\Sym^d U$ for $\mc{O}_{\PP^b}(a)$. It follows from Lemma~\ref{lem:induced-H0} that the complex
\begin{equation}\label{eq:H0C-res-Iab-b-1}
0 \lra H^0_*\left(\mc{C}^{-b}\right) \lra \cdots \lra H^0_*\left(\mc{C}^{-1}\right) \lra H^0_*\left(\mc{C}^{0}\right)
\end{equation}
gives a resolution of the image of $H^0_*\left(\mc{C}^{0}\right)\lra H^0_*\left(\mc{C}^{1}\right)$, which by construction equals $I_{a,b}^{b-1}$. Keeping track of grading, it follows that the minimal graded free resolution $F_{\bullet}$ of $I_{a,b}^{b-1}$ is given by
\begin{align*}
F_i &= \bw^i\Sym^{d+b-2}U \oo \Sym^{b-1-i}(\Sym^d U) \oo S(-d+a-i) \oplus \\
&\qquad \bw^{b-i}\Sym^{d+b-2}U \oo \Sym^{i-2}(\Sym^d U) \oo S(-d-i+1).
\end{align*}
The formulas for the Betti numbers follow by direct calculation, while the regularity statement follows from \cite{DE-syzygies}*{Corollary 4.5}.
\end{proof}

\subsection{Minimal free resolutions via instanton bundles on binary cubics}
\label{subsec:syzygies-Ia3}

In this section we assume that $b=3$, and our goal is to describe the minimal free resolution for the ideals $I_{a,3}\subset S$. 

\begin{proof}[Proof of Theorem \ref{thm:res-Ia3}]
Using the notation in Section~\ref{subsec:presen-Oa}, we let $\mc{E} = \mc{F}^{\vee}(-a-1)$, so after dualizing (\ref{eq:pres-Oa}) we obtain a resolution
\begin{equation}\label{eq:res-E-faenzi}
 0 \lra \mc{O}_{\PP^3}(-2a-1) \lra A(-a-1) \lra B(-a) \lra \mc{E} \lra 0,
\end{equation}
where $A=\Sym^{3a}U$ and $B=\Sym^{3a+1}U$, and $(-)(i)$ is shorthand for $- \oo \mc{O}_{\PP^3}(i)$. Using \cite{faenzi}*{Theorem~3.8 and Lemma~3.10}, $\mc{E}$ is the unique $\SL_2$-equivariant instanton bundle on $\PP^3$ with second Chern class $c_2(\mc{E}) = {a+1\choose 2}$. By \cite{faenzi}*{Lemma~3.3} (see also \cite{OSS}*{Example~5 in Chapter 2, Section 3.3.2}), it admits a linear monad 
\[
  0 \lra V_{-1}(-1) \lra V_0\oo\mc{O}_{\PP^3} \overset{\pd}{\lra} V_1(1) \lra 0,
\] 
(i.e., the sequence is exact except in the middle, where its cohomology is $\mc{E}$) where
\[
  V_0 = \Sym^{2a+1}U \oplus \Sym^{a-2}U \oo \Sym^{a-1}U,\quad V_1 = \bw^2(\Sym^a U),\quad\mbox{and}\quad V_{-1}=V_1^{\vee}.
\]
If we set $\mc{K}=\ker(\pd)$ then we have a short exact sequence
\[ 0 \lra V_{-1}(-1) \lra \mc{K} \lra \mc{E} \lra 0,\]
and since $H^1(\mc{O}_{\PP^3}(a-1))=0$, it follows from the long exact sequence in cohomology that the surjection $B(-a)\onto\mc{E}$ lifts to a map $B(-a)\lra\mc{K}\subseteq V_0\oo\mc{O}_{\PP^3}$. Since $V_{-1}(-1)$ injects into $\mc{K}$, we get an induced map of complexes
\[
\xymatrix{
0 \ar[r] & \mc{O}_{\PP^3}(-2a-1) \ar[r] &  A(-a-1) \ar[r] \ar@{.>}[d] &  B(-a) \ar[d] & & \\
& 0 \ar[r] & V_{-1}(-1) \ar[r] &  V_0\oo\mc{O}_{\PP^3}  \ar[r] &  V_1(1) \ar[r] & 0\\
}
\]
which by construction is a quasi-isomorphism. The associated mapping cone is then exact, and dualizing, twisting by $\mc{O}_{\PP^3}(-2a-1)$, and identifying $\SL_2$-representations with their duals yields an exact complex 
\[\mc{C}^{\bullet}:\ 0 \lra V_1(-2a-2) \lra V_0(-2a-1)\lra B(-a-1)\oplus V_{-1}(-2a) \lra A(-a) \lra \mc{O}_{\PP^3} \lra 0,\]
where $\mc{O}_{\PP^3}$ is in cohomological degree $1$. We can then apply Lemma \ref{lem:induced-H0} with $b=3$ to get a complex of free modules
\[ F_i = H^0_*\left(\mc{C}^{-i}\right)\]
which is exact except at $F_{-1}$. Since the image of the differential $F_0\lra F_{-1}$ is the ideal $I_{a,3}$, we get a minimal free resolution 
\[ 0 \lla I_{a,3} \lla F_0 \lla F_1 \lla F_2 \lla F_3 \lla 0,
\]
where
\begin{align*} 
F_3 &= V_1\oo S(-2a-2), & F_2 &= V_0\oo S(-2a-1),\\
F_1 &= B\oo S(-a-1)\oplus V_{-1}\oo S(-2a), &  F_0 &= A\oo S(a).
\end{align*}
Computing the dimensions of $A,B,V_{-1},V_0,V_1$ concludes the proof of Theorem~\ref{thm:res-Ia3}.
\end{proof}

\end{document}